\renewcommand{\Re}{\operatorname{Re}}
\renewcommand{\Im}{\operatorname{Im}}
\numberwithin{equation}{section}
\newtheorem{theorem}{Theorem}[section]
\newtheorem{lemma}[theorem]{Lemma}
\newtheorem{proposition}[theorem]{Proposition}
\theoremstyle{definition}
\theoremstyle{remark}
\newtheorem{remark}[theorem]{Remark}
\title[Ground states with point interaction]{Uniqueness and nondegeneracy of ground states for 2d-nonlinear scalar field equations with point interaction}
\author[N. Fukaya]{Noriyoshi Fukaya}
\address{Waseda Research Institute for Science and Engineering, 
Waseda University, 
Tokyo 169-8555, Japan
\newline\indent
Osaka Central Advanced Mathematical Institute, 
Osaka Metropolitan University, 
Osaka 558-8585, Japan
\newline\indent
Institute for Mathematics and Computer Science, 
Tsuda University, 
Tokyo 187-8577, Japan}
\email{nfukaya@aoni.waseda.jp}
\date{
Updated at \today}
\newcommand{\R}{\mathbb{R}}
\newcommand{\al}{\alpha}
\newcommand{\lam}{\lambda}
\newcommand{\rad}{\mathrm{r}}
\newcommand{\set}[3][]{#1\{#2\;\!#1|\;#3#1\}}
\newcommand{\bset}[2]{\left\{#1\;\!\middle|\;#2\right\}}
\begin{document}

\begin{abstract}
We study uniqueness and nondegeneracy of ground states for nonlinear scalar field equations in two dimensions with a point interaction at the origin. It is known that the all ground states are radial, positive, and decreasing functions. In this paper we prove the uniqueness of positive radial solutions by a method of Poho\v{z}aev identities. As a corollary, we obtain the uniqueness of ground states. Moreover, by a variational and ODE technique, we show that the ground state is a nondegenerate critical point of the action in the energy space.
\end{abstract}

\maketitle

\tableofcontents

\section{Introduction}

We consider the following nonlinear scalar field equation in two dimensions:
\begin{equation}\label{eq:sp}
	(-\Delta_\alpha 
	+\lambda )u
	-|u|^{p-1}u
	=0,\quad 
	u\in H_\alpha^1(\R^2),	
\end{equation}
and the following boundary value problem for the ordinary differential equation corresponding to positive radial solutions for \eqref{eq:sp}:
\begin{equation} \label{eq:ODE}
    \left\{\begin{alignedat}{2}
   &\mathopen{}
    - u''
    -\frac{1}{r} u' 
    +\lambda u 
    -u^p 
    =0,& \quad
   &r>0,
\\ &\mathopen{}
    u(r)
    =y_u\Bigl(-\frac{1}{2\pi}\log r
    +\alpha\Bigr)+o(1)&\quad
   &\text{as }r\downarrow 0
    \text{ for some }y_u>0,
\\ &\mathopen{}u(r)\to 0&\quad
   &\text{as }r\to\infty.
    \end{alignedat}\right.
\end{equation}
Here $p>1$, $\lambda>0$, $-\Delta_\alpha$ is the Schr\"odinger operator with a point interaction at the origin with its interaction strength $\alpha\in\R$, and $H_\alpha^1(\R^2)$ is the form domain (or the energy space) of $-\Delta_\alpha$. The operator $-\Delta_\alpha $ is realized as a self-adjoint extension of the symmetric operator
\begin{equation}  \label{eq:op0}
    \left\{\begin{aligned}
   &D(H_0)=C_c^\infty(\R^2\setminus\{0\}),
\\ &H_0=-\Delta
    \end{aligned}\right.
\end{equation} 
in the Hilbert space $L^2(\R^2)$ (see \cite{AlbeGeszHo-eHold05} and references therein for the comprehensive overview) and formally expressed with the Dirac delta function $\delta_0$ centered at the origin as 
\[  -\Delta_\alpha v
    =-\Delta v
    +C(\alpha, v)\delta_0,\quad 
    v\in D(-\Delta_\alpha) \] 
for some constant $C(\alpha, v)\in \mathbb{C}$ (see \eqref{eq:defpi} and after that for the precise definition of $-\Delta_\alpha$). 

The equation \eqref{eq:sp} arises when considering the standing wave solution $\psi(t, x) = e^{i\lambda t}u(x)$ for the nonlinear Schr\"odinger equation with a point interaction:
\begin{equation} \label{eq:NLS}
i\partial_t\psi = -\Delta_\alpha\psi - |\psi|^{p-1}\psi,\quad (t, x) \in \mathbb{R} \times \mathbb{R}^2.
\end{equation}
Recently, the equation \eqref{eq:NLS} has garnered significant attention and has been extensively studied. For physical background,  see \cite{SakaMalo20, ShamChenMaloSaka20}. The local well-posedness of the Cauchy problem for \eqref{eq:NLS} was established in the operator domain $D(-\Delta_\alpha)$ by \cite{CaccFincNoja21} and in the form domain $H_\alpha^1(\R^2)$ by \cite{FukaGeorIked22}, utilizing the Strichartz estimates in \cite{CornMichYaji19}. Further research has focused on the existence, symmetry, and stability of standing waves with minimal action, known as ground states, as investigated in \cite{AdamBoniCarlTent22-2d, FukaGeorIked22}. The (non-)scattering of solutions was explored in \cite{CaccFincNoja23}, while \cite{FincNoja23} studied the blowup of solutions and the strong instability of standing waves. For related problems in three dimensions or involving different nonlinearities, see, for example, \cite{AdamBoniCarlTent22-3d, GeorMichScan24, MichOlgiScan18, PompWata}. The equation \eqref{eq:sp} is also connected to singular solutions of the nonlinear scalar field equation:
\begin{equation}\label{eq:spwop}
(-\Delta + \lambda) u - |u|^{p-1}u = 0,
\end{equation}
because solutions of \eqref{eq:sp} satisfy \eqref{eq:spwop} for all $x \in \mathbb{R}^2 \setminus \{0\}$ along with the boundary conditions:
\begin{align*}
&\lim_{|x| \to 0} |u(x)| = \infty,&
&\lim_{|x| \to \infty} u(x) = 0.
\end{align*}
The study of singular solutions has been developed in different literature; see, for example, \cite{BrezVero80, JohnPanYi94, Lion80, VazqVero84, Vero81}. This paper aims to establish the uniqueness and nondegeneracy of ground states for \eqref{eq:sp}. These properties are crucial for analysing the dynamics around the standing waves for \eqref{eq:NLS}, for example, when applying results from \cite{CazeLion82, GrilShatStra87, Shat83, ShatStra85, Wein86}.

Now following \cite{AlbeGeszHo-eHold05} we briefly state some properties on the Schr\"odinger operator with a point interaction at the origin. It is known that all self-adjoint extensions of \eqref{eq:op0} is expressed as the one-parameter family $(-\Delta_\alpha)_{\alpha\in(-\infty, \infty]}$ of self-adjoint operators $-\Delta_\alpha$ defined by
\begin{equation}\label{eq:defpi}
    \left\{\begin{aligned}
   &D(-\Delta_\alpha)
    =\set[\Big]{f+\dfrac{f(0)}{\alpha+\beta(\lambda)}G_\lambda}{f\in H^2(\R^2)},
\\ &(-\Delta_\alpha+\lambda)v
    =(-\Delta+\lambda)f,\quad v=f+\dfrac{f(0)}{\alpha+\beta(\lambda)}\in D(-\Delta_\alpha)
    \end{aligned}\right.
\end{equation}
for $\alpha\in\R$ and 
\[  D(-\Delta_\infty)=H^2(\R^2),\quad 
    -\Delta_\infty f
    =-\Delta f. \]
Here $\lambda>0$ is a fixed constant with $\lambda\ne|e_\alpha|$ (see \eqref{eq:negaengen} for the definition of $e_\al$),
\begin{equation*}\label{eq.2d1}
   \beta(\lambda) :=\frac{1}{2\pi}\Bigl(\gamma + \log \frac{\sqrt{\lam}}{2}\Bigr),
\end{equation*}
$\gamma>0$ is the Euler--Mascheroni constant, and $G_{\lam}$ is the Green function of $-\Delta+\lam$ on $\R^2$, which satisfies the distributional relation $(-\Delta+\lam)G_{\lam}=\delta_0$ and is given by
\begin{equation}\label{eq:expG1}
  G_{\lam}(x) = \frac1{2\pi} \mathcal{F}^{-1} \Bigl[\frac{1}{|\xi|^2+\lam}\Bigr](x) = \frac1{(2\pi)^2} \int_{\R^2} \frac{e^{ix\cdot\xi}}{|\xi|^2+\lam} \, d\xi,
\end{equation}
where $\mathcal{F}^{-1}$ is the inverse Fourier transform. Note that $f(0)$ makes sense for $f\in H^2(\R^2)$ by the embedding $H^2(\R^2)\hookrightarrow C(\R^2)$. From the expression~\eqref{eq:expG1}, we see that
\begin{align*}
\label{eq:GlamH1ep}
    &G_\lam \in H^{s}(\R^2),\quad s<1,&
    &G_\lam \notin H^1(\R^2).
\end{align*}
The function $G_\lam$ is also represented as
\begin{equation}\label{eq:defGlambda}
  G_\lam(x) = \frac{1}{2\pi} K_{0}(\sqrt{\lam}|x|),
\end{equation}
with the modified Bessel function $K_0$ of the second kind (or the Macdonald function) of order zero. From the expression~\eqref{eq:defGlambda} and \cite[Chapter~10]{NIST:DLMF}, we see that $G_\lam$ is positive, radial, and exponentially decreasing function with the asymptotics
\begin{equation}\label{eq:Gexpdecay}
    G_\lam(r) = -\frac{1}{2\pi}\log r-\beta(\lam) +o(1)\quad \text{as } r \to 0.
\end{equation}
Note that $-(2\pi)^{-1}\log r$ corresponds to the fundamental solution of the Laplacian in $\R^2$. We can easily check that the definition \eqref{eq:defpi} of $-\Delta_\alpha$ is independent of the choice of $\lambda$. 

The spectral properties of $-\Delta_\alpha$ have already been known (see \cite{AlbeGeszHo-eHold05} for more details). For $\alpha\in\R$, the essential spectrum is given by
\[  \sigma_{\mathrm{ess}}(-\Delta_\alpha)
    =\sigma(-\Delta)
    =[0, \infty). \]
Moreover, $-\Delta_\alpha$ has exactly one simple negative eigenvalue
\begin{equation} \label{eq:negaengen}
    e_\alpha:=-4e^{-4\pi\alpha-2\gamma}<0
\end{equation}
and the corresponding eigenfunction $G_{|e_\alpha|}$. The spectrum of $-\Delta_\alpha$ is given by 
\[  \sigma(-\Delta_\alpha)
    =\sigma_{\mathrm{p}}(-\Delta_\alpha)\cup\sigma_{\mathrm{ess}}(-\Delta_\alpha)
    =\{e_\alpha\}\cup[0, \infty).   \]
The form domain $H_\alpha^1(\R^2):=D[-\Delta_\alpha]$ is given by 
\[  H_\alpha^1(\R^2)
    =\left\{\begin{alignedat}{2}
   &\set{f+cG_\lambda}{f\in H^1(\R^2),\: c\in\mathbb{C}} &\quad
   &\text{if }\alpha\in\R,
\\ & \omit \hfill $H^1(\R^2)$\hfill  &\quad 
   &\text{if }\alpha=\infty,
    \end{alignedat}\right. \]
and the quadratic form on $H_\alpha^1(\R^2)$ is defined by 
\begin{equation}\label{eq:norm}
    \langle (-\Delta_\alpha+\lambda)v, v\rangle 
    =\|\nabla f\|_{L^2}^2
    +\lambda\|f\|_{L^2}^2
    +(\alpha+\beta(\lambda))|c|^2    
\end{equation}  
for $v=f+cG_\lambda\in H_\alpha^1(\R^2)$. If $\lambda>|e_\alpha|$, the square root of \eqref{eq:norm} is a norm in $H_\alpha^1(\R^2)$.

We define the action for \eqref{eq:sp} by 
\[  S_\lambda(v)
    :=\frac12\langle (-\Delta_\alpha+\lambda)v, v\rangle 
    -\frac1{p+1}\|v\|_{L^{p+1}}^{p+1},\quad 
    v\in H_\alpha^1(\R^2).  \]
Then $u\in H_\alpha^1(\R^2)$ solves \eqref{eq:sp} if and only if $S_\lambda'(u)=0$. We define the set of nontrivial solutions of \eqref{eq:sp} by
\[  \mathcal{A}_\lambda
    :=\set{u\in H_\alpha^1(\R^2)}
    {u\ne 0,\: S_\lambda'(u)=0} \]
and the set of \emph{ground states} by
\begin{equation*} \label{eq:GS}
    \mathcal{G}_\lambda 
    :=\set{u\in\mathcal{A}_\lambda}
    {S_\lambda(u)\le S_\lambda(v),\:\forall v\in \mathcal{A}_\lambda}. 
\end{equation*}
From the embedding $H_\alpha^1(\R^2)\hookrightarrow L^q(\R^2)$ for $q\ge2$, we see from the equation \eqref{eq:sp} that $\mathcal{A}_\lambda\subset D(-\Delta_\alpha)$. 

The following results on existence and symmetry have been already known.

\begin{proposition}[\cite{AdamBoniCarlTent22-2d, FukaGeorIked22}] \label{prop:1}
Let $p>1$ and $\alpha\in\R$. The following holds.
\begin{itemize}
\item If $\lambda>|e_\alpha|$, then $\mathcal{G}_\lambda\ne\emptyset$.
\item If $\lambda>|e_\alpha|$, $u\in \mathcal{G}_\lambda$, and $u$ is decomposed as $u=f+f(0)(\alpha+\beta(\lambda))^{-1}G_\lambda$, then there exists $\theta\in\R$ such that $e^{i\theta}f$ is a positive, radial, and decreasing function. In particular, so is $e^{i\theta}u$.
\end{itemize}
\end{proposition}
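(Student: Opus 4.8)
The natural route is variational for the first bullet and rearrangement-based for the second, so I would treat the two assertions separately. For existence I would realize $\mathcal{G}_\lambda$ as the set of minimizers of $S_\lambda$ on the Nehari manifold, setting
\[
  d_\lambda := \inf\set{S_\lambda(v)}{v\in H_\alpha^1(\R^2)\setminus\{0\},\ \langle S_\lambda'(v), v\rangle = 0}
  = \inf_{v\neq 0}\ \max_{t>0} S_\lambda(tv).
\]
On the constraint one has $S_\lambda(v) = \bigl(\tfrac12-\tfrac1{p+1}\bigr)\langle(-\Delta_\alpha+\lambda)v,v\rangle$, so the embedding $H_\alpha^1(\R^2)\hookrightarrow L^{p+1}(\R^2)$ bounds the form below away from $0$ on the manifold, giving $d_\lambda>0$ and boundedness of minimizing sequences. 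The first real step is the strict inequality $d_\lambda < d_0$, where $d_0$ is the ground-state level of the free equation $(-\Delta+\lambda)w = w^p$ in $\R^2$ with classical positive radial ground state $Q$. This follows from the trial family $v_\ve = Q + \ve G_\lambda$: since the singular term contributes $(\alpha+\beta(\lambda))\ve^2$ to the quadratic form \emph{only at second order}, the first-order variation of $\max_{t>0}S_\lambda(tv_\ve)$ at $\ve=0$ equals $-\int_{\R^2} Q^p G_\lambda\,dx < 0$, whence $d_\lambda \le \max_{t>0}S_\lambda(tv_\ve) < S_\lambda(Q) = d_0$ for small $\ve>0$.

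The main analytic obstacle for existence is compactness of minimizing sequences. Because the point interaction is pinned at the origin while the nonlinearity and the bulk operator $-\Delta+\lambda$ are translation invariant, the only loss of compactness is a bubble of the free equation escaping to infinity, and each such bubble carries energy at least $d_0$. A concentration--compactness (profile-decomposition) analysis therefore forces the dichotomy: either a minimizing sequence is relatively compact in $L^{p+1}(\R^2)$, or it splits off a free bubble and $d_\lambda \ge d_0$. The strict inequality $d_\lambda < d_0$ excludes the second alternative, so the weak limit is nonzero, convergence is strong in $L^{p+1}(\R^2)$, and the limit attains $d_\lambda$, giving a ground state. The singular component $c_n G_\lambda$ is harmless here, since $G_\lambda\in L^q(\R^2)$ for every $q<\infty$ and the coefficients $c_n$ stay bounded.

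For symmetry, write $u = f + cG_\lambda\in\mathcal{G}_\lambda$. A phase makes $c\ge 0$, and necessarily $c\neq 0$: if $c=0$ then $u\in H^1(\R^2)$ would be a free ground state with $S_\lambda(u)=d_0>d_\lambda$, contradicting minimality—so every ground state genuinely feels the interaction. I would then reduce to a nonnegative real profile by passing to $|u|$: near the origin $u\sim cG_\lambda\to+\infty$ dominates the continuous part $f\to f(0)$, so $|u|$ keeps the same singular coefficient $c$ while its regular part $|u|-cG_\lambda$ stays bounded at $0$; a diamagnetic inequality gives $S_\lambda(|u|)\le S_\lambda(u)$, and minimality forces $u=e^{i\theta}|u|$. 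Finally Schwarz symmetrization $v\mapsto v^*$ preserves $\|v\|_{L^{p+1}}$ and, by a level-set computation near the origin (the super-level sets of a function blowing up like $-\tfrac{c}{2\pi}\log r$ are asymptotically disks of matching area), again preserves the coefficient $c$; combined with a P\'olya--Szeg\H{o} inequality adapted to the point-interaction form this yields $S_\lambda(v^*)\le S_\lambda(v)$, so the ground state is radial and radially nonincreasing. The delicate point here—the genuine obstacle for symmetry—is precisely this rearrangement inequality for $\langle(-\Delta_\alpha+\lambda)\cdot,\cdot\rangle$: one must show that passing to $|v|$ and then to $v^*$ does not increase $\|\nabla f\|_{L^2}^2+\lambda\|f\|_{L^2}^2$ while leaving $c$ untouched, even though the form is expressed through the regular part $f$ rather than through $v$ itself.

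It remains to upgrade to strict positivity and monotonicity of $f$, and this does \emph{not} follow from the corresponding properties of $u$, since $f = u-cG_\lambda$ is a difference of nonincreasing functions. Here I would use the equation directly: by \eqref{eq:defpi} the regular part solves $(-\Delta+\lambda)f = u^p$, so $f = G_\lambda * u^p$. As the convolution of two positive, radial, nonincreasing functions ($G_\lambda$ and $u^p$), $f$ is itself positive, radial, and nonincreasing—this is the layer-cake/Riesz fact that $\mathbf 1_{B_r}*\mathbf 1_{B_s}$ is radially nonincreasing—and then $u = f + cG_\lambda$ with $c>0$ is positive, radial, and decreasing as well, which is the assertion.
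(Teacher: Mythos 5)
This proposition is not proved in the paper at all: it is quoted from \cite{AdamBoniCarlTent22-2d, FukaGeorIked22}, so there is no internal proof to compare against; what follows judges your sketch on its own terms and against the one device the paper does record for exactly this purpose, namely \eqref{eq:1.15}. Your existence half is sound in outline: the Nehari reformulation, the strict inequality $d_\lambda<d_0$ via the trial family $Q+\ve G_\lambda$ (the envelope computation $\frac{d}{d\ve}\max_{t>0}S_\lambda(tv_\ve)\big|_{\ve=0}=\langle S_\lambda'(Q),G_\lambda\rangle=-\int_{\R^2}Q^pG_\lambda\,dx<0$ is correct, because the coefficient of $G_\lambda$ enters the quadratic form only at order $\ve^2$), and the concentration--compactness dichotomy in which escaping bubbles lie in $H^1(\R^2)$, see only the translation-invariant free form, and hence cost at least $d_0$.

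The genuine gap is in the symmetry half, and you have in fact named it yourself: both your modulus step and your symmetrization step rely on inequalities for the point-interaction form applied to the \emph{whole} function ($S_\lambda(|u|)\le S_\lambda(u)$ via ``a diamagnetic inequality'', and $S_\lambda(v^*)\le S_\lambda(v)$ via ``a P\'olya--Szeg\H{o} inequality adapted to the point-interaction form''), and neither is proved --- nor can they be obtained from the classical pointwise inequalities, since $u\notin H^1(\R^2)$ (recall $G_\lambda\notin H^1(\R^2)$), so $\|\nabla u\|_{L^2}$ and $\|\nabla u^*\|_{L^2}$ are both infinite and the form must be controlled through the regular part; it is not even clear that $|u|$ or $v^*$ decomposes as $g+cG_\lambda$ with $g\in H^1(\R^2)$ and the same coefficient, and your level-set heuristic near the origin controls neither that decomposition nor $\|\nabla g\|_{L^2}$. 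The resolution used in the cited works, and encoded in the paper as \eqref{eq:1.15} (and re-used in the proof of Lemma~\ref{lem:5.7}), is to rearrange the \emph{components of the decomposition} rather than the function: replace $f+cG_\lambda$ first by $|f|+|c|G_\lambda$, then by $|f|^*+|c|G_\lambda$. Then only the classical diamagnetic and P\'olya--Szeg\H{o} inequalities for $f$ are needed, the coefficient term $(\alpha+\beta(\lambda))|c|^2$ is untouched, and for the nonlinear term one uses the pointwise bound $|f+cG_\lambda|\le|f|+|c|G_\lambda$ together with the rearrangement inequality $\int\Phi(|f|+|c|G_\lambda)\le\int\Phi(|f|^*+|c|G_\lambda)$, valid because $G_\lambda$ is already radial and decreasing; a Lemma~\ref{lem:Knega}-type argument then upgrades ``smaller $Q_{\lambda,\ve}$, larger $N_\ve$'' to ``also a minimizer''. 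A second, related gap: symmetrization of either kind only produces \emph{some} radial minimizer, whereas the proposition asserts that \emph{every} ground state is, up to a phase, positive radial decreasing; closing that requires the equality cases of these inequalities (or uniqueness, which is unavailable here --- it is the paper's Theorem~\ref{thm:1}). Your closing observation that $f=G_\lambda*u^p$ is a convolution of positive, radial, nonincreasing functions is a correct and clean way to get positivity and monotonicity of the regular part, but it presupposes that $u$ itself is already known to be radial and nonincreasing, so it cannot repair the preceding steps.
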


First we state our main results on uniqueness.

\begin{theorem} \label{thm:1}
For $p>1$, $\alpha\in\R$, and $\lambda>|e_\alpha|$, the equation \eqref{eq:sp} has at most one positive radial solution. 
\end{theorem}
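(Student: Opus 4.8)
The plan is to reduce Theorem~\ref{thm:1} to a uniqueness statement for the boundary value problem \eqref{eq:ODE}, and then to attack that ODE problem by a Poho\v{z}aev-type argument, exactly as the abstract advertises. By Proposition~\ref{prop:1} every ground state is (up to phase) positive and radial, but Theorem~\ref{thm:1} is the stronger assertion that there is at most one positive \emph{radial} solution in the energy space, so I would first translate the PDE \eqref{eq:sp} for a positive radial $u\in H_\alpha^1(\R^2)$ into the ODE system \eqref{eq:ODE}. The key point of this translation is the singular behaviour at the origin: writing $u=f+f(0)(\alpha+\beta(\lambda))^{-1}G_\lambda$ with $f\in H^2$ radial, the local expansion \eqref{eq:Gexpdecay} of the Green function forces the boundary condition $u(r)=y_u(-\tfrac{1}{2\pi}\log r+\alpha)+o(1)$ with $y_u=f(0)(\alpha+\beta(\lambda))^{-1}>0$. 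So a positive radial solution of \eqref{eq:sp} is exactly a solution of \eqref{eq:ODE}, and the boundary condition at $0$ encodes both the strength of the logarithmic singularity and the interaction parameter $\alpha$ through the same constant $y_u$.

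The heart of the matter is the uniqueness for \eqref{eq:ODE}. Here I would argue by contradiction: suppose $u_1,u_2$ are two distinct positive radial solutions, with singular coefficients $y_1,y_2>0$. The strategy is to exploit the ODE scaling and the two free parameters (the asymptotic coefficient $y_u$ and the decay rate governed by $\lambda$) against each other using Poho\v{z}aev identities. Concretely, for a solution $u$ of the first line of \eqref{eq:ODE} one multiplies by the natural test functions $u$, $ru'$, and the scaling field, integrates over an annulus $\{\varepsilon<r<R\}$, and carefully tracks the boundary contributions at $r=\varepsilon$ (using the log-singular expansion) and at $r=R$ (using the exponential decay $u\to 0$). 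Because the leading singular term $-(2\pi)^{-1}\log r$ is the harmonic profile, its Dirichlet energy localizes and the inner boundary terms produce explicit multiples of $y_u^2$ and $\alpha y_u^2$; the outer boundary terms vanish in the limit $R\to\infty$. Combining the resulting identities gives algebraic relations among the quantities $\int u^{p+1}$, $\lambda\int u^2$, $\int|u'|^2$, and the boundary constant $y_u$, with the dependence on $\alpha$ entering linearly through the inner boundary.

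From these Poho\v{z}aev relations I expect to be able to express everything in terms of a single scalar invariant of the solution --- for instance, to show that the quotient $y_u^2/\|u\|_{L^{p+1}}^{p+1}$, or some analogous ratio of boundary coefficient to a power of a nonlinear functional, is determined uniquely by $p$, $\lambda$, and $\alpha$. The final step is a monotonicity/rigidity argument: using the standard ODE uniqueness for the regular initial value problem away from the origin (solutions of $-u''-r^{-1}u'+\lambda u-u^p=0$ decaying at infinity are parametrized by a single shooting parameter, e.g. the value $u(\infty\text{-normalized})$ or equivalently the coefficient of the decaying Bessel profile), I would show that the map from this shooting parameter to the pair $(y_u,\alpha)$ is injective, so that fixing $\alpha$ and $\lambda$ pins down the solution. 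The main obstacle, and the step requiring the most care, is the rigorous justification of the boundary terms at $r=\varepsilon$ in the Poho\v{z}aev identities: because $u$ and $u'$ blow up logarithmically and like $r^{-1}$ respectively, one must show that only finitely many singular contributions survive the limit $\varepsilon\downarrow 0$ and that the $o(1)$ remainder in the expansion \eqref{eq:ODE} genuinely does not contribute, which requires sharpening the asymptotics of $u$, $u'$, and $u''$ near the origin beyond the leading order stated in the boundary condition.
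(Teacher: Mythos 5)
Your first step---the reduction of positive radial solutions of \eqref{eq:sp} to the boundary value problem \eqref{eq:ODE} via the decomposition $u=f+f(0)(\alpha+\beta(\lambda))^{-1}G_\lambda$ and the expansion \eqref{eq:Gexpdecay}---is correct and is exactly the paper's Lemma~\ref{lem:2.1}. The core of your proposal, however, has a genuine gap: you interpret ``Poho\v{z}aev identities'' as the classical \emph{integral} identities (multiplying by $u$ and $ru'$, integrating over an annulus, and tracking boundary terms), and you then hope that the resulting finitely many scalar relations among $\int u^{p+1}$, $\lambda\int u^2$, $\int|u'|^2$, $y_u$, and $\alpha$ pin the solution down. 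They cannot: such identities are necessary conditions satisfied by \emph{every} solution, and nothing prevents two distinct solutions from satisfying the same handful of scalar equations. Indeed, even for the regular equation ($\alpha=\infty$) uniqueness of the positive radial solution is not accessible by integral Poho\v{z}aev identities; it required the delicate ODE analysis of Kwong. Your final step compounds the problem: the ``injectivity of the map from the shooting parameter to $(y_u,\alpha)$'' \emph{is} the uniqueness statement, and you invoke it as if it followed from standard uniqueness for the regular initial value problem, which only gives well-posedness of the shooting map, not its injectivity. As stated, the argument is circular at precisely the point where all the work lies.

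What the paper actually does (following Shioji--Watanabe) is build a \emph{pointwise} Poho\v{z}aev function
\[
J[u](r)=\frac12 a(r)u'(r)^2+b(r)u'(r)u(r)+\frac12\bigl(c(r)-a(r)\lambda\bigr)u(r)^2+\frac{1}{p+1}a(r)u(r)^{p+1},
\]
with $a,b,c$ chosen so that $J[u]'(r)=C(r)u(r)^2$ where, in two dimensions, $C(r)<0$ for \emph{all} $r>0$; combined with $J[u](r)\to0$ at infinity this gives the positivity $J[u]>0$ everywhere (this is \eqref{eq:Jposi}, the step that fails in 3d, cf.\ Remark~\ref{rem:3d}). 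Uniqueness then comes from comparing two solutions through $w=v/u$ and $X=w^2J[u]-J[v]$: one shows $X(r)\to0$ as $r\downarrow0$ (using the matched logarithmic singularities), $X'=2ww'J[u]$, and---by the Shioji--Watanabe monotonicity result---$y_v>y_u$ forces $w'<0$, hence $X'<0$, contradicting $X\to0$ at both ends. This forces $y_u=y_v$, and a Gronwall argument on the integral equation \eqref{eq:n2.2} then yields $u=v$. None of these ingredients (the monotone quantity $J$, its sign, the comparison function $X$, the Gronwall step for equal singular coefficients) appears in your proposal, and without them the proof does not close.
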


\begin{remark}
We can easily check by taking the $L^2$-inner product of \eqref{eq:sp} and the eigenfunction $G_{|e_\alpha|}$ of $-\Delta_\alpha$ that if $\lambda\le |e_\alpha|$, then \eqref{eq:sp} does not have any positive solutions. 
\end{remark}

As a consequence of Theorem~\ref{thm:1} combined with Proposition~\ref{prop:1}, we have the uniqueness of ground states.

\begin{theorem} 
For $p>1$, $\alpha\in\R$, and $\lambda>|e_\alpha|$, there exists the unique positive, radial, and decreasing solution $u_\lambda$ of \eqref{eq:sp} such that 
\[  \mathcal{G}_\lambda 
    =\set{e^{i\theta}u_\lambda}
    {\theta\in \R}. \]
\end{theorem}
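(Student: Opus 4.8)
The plan is to deduce the statement directly from the uniqueness of positive radial solutions (Theorem~\ref{thm:1}) and the existence and symmetry of ground states (Proposition~\ref{prop:1}), the only extra ingredient being the gauge (phase) invariance of the action $S_\lambda$ and of the solution set $\mathcal{A}_\lambda$. First I would record this invariance. Since $-\Delta_\alpha$ is complex-linear and $|e^{i\theta}v|=|v|$ pointwise, for every $\theta\in\R$ and $v\in H_\alpha^1(\R^2)$ one has $\langle(-\Delta_\alpha+\lambda)(e^{i\theta}v),e^{i\theta}v\rangle=\langle(-\Delta_\alpha+\lambda)v,v\rangle$ and $\|e^{i\theta}v\|_{L^{p+1}}=\|v\|_{L^{p+1}}$, so $S_\lambda(e^{i\theta}v)=S_\lambda(v)$. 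Moreover, if $u$ solves \eqref{eq:sp} then $(-\Delta_\alpha+\lambda)(e^{i\theta}u)=e^{i\theta}(-\Delta_\alpha+\lambda)u=e^{i\theta}|u|^{p-1}u=|e^{i\theta}u|^{p-1}(e^{i\theta}u)$, so $e^{i\theta}u$ also solves \eqref{eq:sp}. Consequently the map $v\mapsto e^{i\theta}v$ sends $\mathcal{A}_\lambda$ onto itself and preserves the value of $S_\lambda$, hence it sends $\mathcal{G}_\lambda$ onto itself.

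Next I would construct the candidate $u_\lambda$. By the first item of Proposition~\ref{prop:1} the set $\mathcal{G}_\lambda$ is nonempty, so I may pick $u\in\mathcal{G}_\lambda$; by the second item there is $\theta_0\in\R$ such that $u_\lambda:=e^{i\theta_0}u$ is positive, radial, and decreasing. By the invariance just recorded, $u_\lambda$ is again a ground state, and in particular a positive radial solution of \eqref{eq:sp}. This simultaneously establishes the existence of a positive radial decreasing ground state and fixes the reference function $u_\lambda$ appearing in the statement.

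For the inclusion $\mathcal{G}_\lambda\supseteq\{e^{i\theta}u_\lambda:\theta\in\R\}$, I would observe that, by the invariance, each $e^{i\theta}u_\lambda$ lies in $\mathcal{A}_\lambda$ and satisfies $S_\lambda(e^{i\theta}u_\lambda)=S_\lambda(u_\lambda)$, so it is a ground state. For the reverse inclusion, take any $v\in\mathcal{G}_\lambda$; by Proposition~\ref{prop:1} there is $\theta_1\in\R$ with $e^{i\theta_1}v$ positive, radial, and decreasing, and by the invariance this function is still a ground state, hence a positive radial solution of \eqref{eq:sp}. Theorem~\ref{thm:1} then forces $e^{i\theta_1}v=u_\lambda$, that is, $v=e^{-i\theta_1}u_\lambda$, which yields the inclusion $\mathcal{G}_\lambda\subseteq\{e^{i\theta}u_\lambda:\theta\in\R\}$ and thus the claimed identity.

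Since the substance of the argument is carried entirely by the two quoted results, I do not expect a genuine obstacle here. The only points requiring care are the bookkeeping of the phase invariance—namely that rotation by $e^{i\theta}$ maps solutions to solutions and ground states to ground states—and the verification that the positive radial decreasing representative produced by Proposition~\ref{prop:1} is indeed an element of $\mathcal{A}_\lambda$, so that Theorem~\ref{thm:1} is applicable to it.
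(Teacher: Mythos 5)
Your proof is correct and follows exactly the route the paper intends: the paper states this theorem as an immediate consequence of Theorem~\ref{thm:1} and Proposition~\ref{prop:1}, and your argument simply fills in the phase-invariance bookkeeping that the paper leaves implicit. No gaps.
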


Next, we state our results on nondegeneracy. The linearized operator $S_{\lambda}''(u_\lambda)$ around the unique positive ground state $u_\lambda\in\mathcal{G}_\lambda$ has the expression 
\[  S_{\lambda}''(u_\lambda) v
    =(-\Delta_\alpha+\lambda)v 
    -pu_\lambda^{p-1}\Re v
    -i u_\lambda^{p-1}\Im v
    \]
for $v\in H_\alpha^1(\R^d)$. This can be decomposed into real part and imaginary part as 
\[  \langle S_{\lambda}''(u_\lambda) v, w\rangle 
    =\langle L_\lambda^+ \Re v, \Re w\rangle
    +\langle L_\lambda^- \Im v, \Im w\rangle, \]
where 
\begin{align} \label{eq:Lpm}
    L_\lambda^+
    &:=-\Delta_\alpha+\lambda 
    -pu_\lambda^{p-1},&
    L_\lambda^-
    &:=-\Delta_\alpha+\lambda 
    -u_\lambda^{p-1}.  
\end{align}

\begin{theorem} \label{thm:2}
For $p>1$, $\alpha\in\R$, and $\lambda>|e_\alpha|$, the unique positive ground state $u_\lambda\in \mathcal{G}_\lambda$ is a nondegenerate critical point of $S_{\lambda}|_{H_\alpha^1(\R^2; \R)}$. More precisely, if $w\in H_\alpha^1(\R^2; \R)$ satisfies $L_\lambda^+w=0$, then $w= 0$. 
\end{theorem}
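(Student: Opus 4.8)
The plan is to exploit the rotational symmetry of $L_\lambda^+$ (inherited from the radiality of $u_\lambda$) to split the problem by angular momentum, and then to treat the radial and nonradial sectors by different ODE arguments. Writing $w\in H_\alpha^1(\R^2;\R)$ in polar coordinates and expanding in the angular modes $e^{ik\theta}$, $k\in\mathbb Z$, the operator $L_\lambda^+$ preserves each sector, so $L_\lambda^+w=0$ decouples into the radial ODEs
\[ L_k\phi_k := -\phi_k'' -\tfrac1r\phi_k' +\tfrac{k^2}{r^2}\phi_k +\lambda\phi_k -pu_\lambda^{p-1}\phi_k =0, \qquad r>0, \]
one for each $k$. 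The crucial structural point is that the point interaction acts only in the sector $k=0$: for $k\neq0$ the finiteness of $\int_0^\infty (k^2/r^2)|\phi_k|^2\,r\,dr$ forces $\phi_k$ to be regular (vanishing like $r^{|k|}$) at the origin, so $\phi_k$ does not see the singular defect $G_\lambda$, whereas for $k=0$ an admissible $\phi_0$ may carry a logarithmic singularity subject to the point-interaction boundary condition $\rho=\alpha q$ linking the coefficient $q$ of $-\tfrac1{2\pi}\log r$ to the constant term $\rho$. I will show each sector has trivial kernel.

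For the nonradial sectors $k\neq0$ I would use the explicit solution coming from broken translation invariance. Differentiating the profile equation gives $L_1(\partial_r u_\lambda)=0$, and since $u_\lambda$ is strictly decreasing, $\zeta:=-\partial_r u_\lambda$ is a strictly positive, zero-free solution of $L_1\zeta=0$. Because $k=1$ is in the limit-point case at the origin, $C_c^\infty((0,\infty))$ is a form core, and the ground-state (Jacobi) substitution $\phi=\zeta\eta$ yields $\langle L_1\phi,\phi\rangle=\int_0^\infty \zeta^2|\eta'|^2\,r\,dr\ge0$ with equality only for $\phi\propto\zeta$. But $\zeta\sim y_u/(2\pi r)$ is singular at the origin and hence not in $H_\alpha^1(\R^2)$; therefore $L_1$ has no kernel in the energy space. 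For $|k|\ge2$ one has $L_k=L_1+(k^2-1)/r^2$ with a strictly positive extra term, so $L_k>0$ and the kernel is again trivial. This disposes of all $k\neq0$.

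The heart of the matter is the radial sector $k=0$, where the point interaction is active. Suppose $\phi_0\in H_\alpha^1(\R^2;\R)$ is radial with $L_0\phi_0=0$, and write its origin expansion $\phi_0\sim -\tfrac{q}{2\pi}\log r+\rho$ with $\rho=\alpha q$. If $q=0$ then $\phi_0$ is regular with $\phi_0(0)=\rho=0$ and $\phi_0'(0)=0$, so $\phi_0\equiv0$ by ODE uniqueness; thus it suffices to rule out $q\neq0$. Here I would combine $\phi_0$ with the ground state $u_\lambda$ of $L_\lambda^-$ (recall $L_\lambda^-u_\lambda=0$, $u_\lambda>0$). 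The substitution $\phi_0=vu_\lambda$ turns the equation into the self-adjoint Sturm--Liouville form
\[ \frac{d}{dr}\bigl(r u_\lambda^2\,v'\bigr)=-(p-1)\,r u_\lambda^{p+1}\,v, \]
and the Wronskian $P:=r(\phi_0'u_\lambda-u_\lambda'\phi_0)=ru_\lambda^2v'$ satisfies $P(\infty)=0$ by exponential decay and, decisively, $P(0^+)=0$: the a priori divergent $\tfrac1r$ contributions cancel precisely because both $u_\lambda$ and $\phi_0$ obey the same point-interaction boundary condition $\rho=\alpha q$. Consequently $0$ is a zero-energy solution of the weighted problem $-(ru_\lambda^2\phi')'=\mu(p-1)ru_\lambda^{p+1}\phi$ at $\mu=1$, whose principal eigenvalue is $\mu_1=0$ with constant eigenfunction (note $\int_0^\infty ru_\lambda^{p+1}v\,dr=0$, so $v$ is orthogonal to the constants and must change sign, being the second eigenfunction). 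Nondegeneracy is therefore equivalent to the strict spectral gap $\mu_2>1$, i.e. to the weighted Poincaré inequality $\int_0^\infty ru_\lambda^2|\phi'|^2\,dr>(p-1)\int_0^\infty ru_\lambda^{p+1}|\phi|^2\,dr$ for every admissible $\phi$ orthogonal to the constants.

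Establishing this strict gap is the main obstacle, and it is where the ODE technique enters. My plan is to use the $\lambda$-derivative $\psi:=\partial_\lambda u_\lambda$, which satisfies $L_0\psi=-u_\lambda$ and (differentiating the boundary condition in $\lambda$) again obeys $\rho=\alpha q$, so that its Wronskians with $\phi_0$ also have vanishing boundary terms. Feeding $\psi$ into the Sturm--Liouville comparison, and exploiting the strict sign $(1-p)<0$ together with the monotonicity of $u_\lambda$ in $\lambda$ available from the uniqueness Theorem~\ref{thm:1}, I would show that any $v$ with the mandatory single sign change violates the comparison, contradicting $\mu_2=1$. The delicate points to control are the precise origin asymptotics of $\psi$ and $\phi_0$ under the point-interaction boundary condition and the sign of $\partial_\lambda u_\lambda$; these are exactly the places where the singular defect, rather than the classical regular ground state, changes the analysis.
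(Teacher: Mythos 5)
Your spherical-harmonics decomposition is the same as the paper's, and your treatment of the nonradial sectors is essentially correct and close in spirit to the paper's argument: for $\mu_j=1$ both proofs come down to the fact that the only decaying solution of $L_1\phi=0$ is $\phi\propto u_\lambda'$, whose $1/r$ singularity at the origin is inadmissible because the modes with $j\ge 1$ do not see the defect and must vanish at the origin (the paper's Case 2, via the Wronskian $\xi_j$); for $\mu_j>1$ your Hardy-term positivity $L_k=L_1+(k^2-1)/r^2$ replaces the paper's sign argument for $\xi_j$ (Case 3), and both work, modulo routine justification of the form-core and equality statements. Your observation that $P=r(\phi_0'u_\lambda-u_\lambda'\phi_0)\to 0$ as $r\downarrow 0$ precisely because $u_\lambda$ and $\phi_0$ obey the same boundary condition $\rho=\alpha q$ is also correct, and the resulting orthogonality $\int_0^\infty ru_\lambda^{p+1}v\,dr=0$ is a genuine identity.

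The gap is in the radial sector, which is the heart of the theorem (the paper spends all of Section~\ref{sec:NDrad} on it). You reduce radial nondegeneracy to the strict gap $\mu_2>1$ for the weighted problem $-(ru_\lambda^2v')'=\mu(p-1)ru_\lambda^{p+1}v$ and propose to prove this gap by comparison with $\psi=\partial_\lambda u_\lambda$; but this object is not available. Its existence via the implicit function theorem requires invertibility of $L_\lambda^+$ on the radial sector, i.e.\ exactly the statement being proved; and a direct ODE construction (variation of parameters, imposing decay at infinity \emph{and} the boundary condition $\rho=\alpha q$ at the origin) is solvable precisely when the Fredholm alternative for $L_\lambda^+$ holds, which is again the statement being proved. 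Unlike the case $\alpha=\infty$, you cannot produce $\partial_\lambda u_\lambda$ by scaling, since dilations change the interaction strength $\alpha$. Moreover, Theorem~\ref{thm:1} gives uniqueness of $u_\lambda$ for each fixed $\lambda$ but neither differentiability in $\lambda$ nor a sign for $\partial_\lambda u_\lambda$, both of which your comparison uses; and the discreteness of the weighted spectrum (needed even to speak of a second eigenvalue $\mu_2$) is not justified, as both weights degenerate at $0$ and at infinity. The paper closes the radial case by a different, non-circular mechanism: it characterizes $u_\lambda$ as the Nehari minimizer of an $\varepsilon$-perturbed problem (constructing the perturbed Green function $G_{\lambda,\varepsilon}$ of Lemma~\ref{lem:5.4} to obtain a nonnegative minimizer, then identifying it with $u_\lambda$ through the Poho\v{z}aev uniqueness argument applied to the perturbed ODE), and then uses the second-order minimality inequality $\langle L_{\lambda,\varepsilon}^+w,w\rangle\ge 0$ on the Nehari tangent space together with $L_{\lambda,\varepsilon}^+=L_\lambda^+-(p-1)\varepsilon\chi u_\lambda^{p-1}$ to force any kernel element to vanish on $\operatorname{supp}\chi$, hence everywhere. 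Unless you adopt some such variational characterization of $u_\lambda$, or find a comparison function whose construction does not presuppose the invertibility of $L_\lambda^+$, your radial step does not close.
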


\begin{remark}
Note that $L_{\lambda}^-u_{\lambda}=0$ from \eqref{eq:sp}. Since $u_{\lambda}$ is positive, we see that $u_{\lambda}$ is the ground state of $L_{\lambda}^-$ and $\ker L_{\lambda}^-=\operatorname{span}\{u_\lambda\}$.
\end{remark}

The uniqueness and nondegeneracy of positive radial solutions for the nonlinear scalar field equation 
\begin{equation*}\label{eq:spwopd}
    (-\Delta+\lambda)u 
    -u^p
    =0,\quad 
    u\in H^1(\R^d)
\end{equation*} 
have been well studied, where $d\in\mathbb{N}$, $p>1$, and $\lambda>0$. For $d=3$ and $p=3$, the uniqueness was firstly proved by \cite{Coff72}. This was extended by \cite{McLeSerr87} and completely solved by \cite{Kwon89}. The nondegeneracy is obtained in \cite{Kwon89, Wein85} by using the uniqueness results. Our results (Theorems~\ref{thm:1} and \ref{thm:2}) are analogies of these results for the case with a point interaction.

To prove Theorems~\ref{thm:1} and \ref{thm:2} we mainly follow the argument of \cite{ShioWata13, ShioWata16}. First, we show that any positive radial solution $u$ of \eqref{eq:sp} satisfies \eqref{eq:ODE}. Note that the boundary condition at the origin is natural because the two functions 
\begin{align*}
    u_1(r)
   &=-\frac{1}{2\pi}\log r,&
    u_2(r)
   &=1 \text{ (constant)}
\end{align*}
compose a system of fundamental solutions of the Euler ordinary differential equation $-u''-u'/r=0$, $r>0$ associated with the Laplace equation in $\R^2$. We also note that the case $\alpha=\infty$ ($-\Delta_\alpha=-\Delta$) corresponds to the boundary condition $u(0)=y_u>0$. Next, we establish the uniqueness of positive solutions of \eqref{eq:ODE} by using the Poho\v{z}aev identities derived in \cite{ShioWata13} (see \cite{Yana91} for the original work by a method of Poho\v{z}aev identities). The proof of the nondegeneracy result (Theorem~\ref{thm:2}) is divide into two steps. We firstly treat the case in the radial function space $H_{\alpha, \rad}^1(\R^2)$ based on \cite{KabeTana99}. We characterize the ground state $u_\lambda$ of \eqref{eq:sp} as that of perturbed equations \eqref{eq:elldel}  below. Then using the uniqueness results and the variational characterization of the ground state, we establish the nondegeneracy in the radial function space $H_{\alpha, \rad}^1(\R^2)$. We secondly treat the case in the whole function space $H_{\alpha}^1(\R^2)$ based on \cite{NiTaka93}. We expand the function $w\in \ker L_\lambda^+$ as a linear combination $w=\sum_{j=0}^\infty w_j(r)e_j(\omega)$ of spherical harmonics $(e_j)_{j=0}^\infty$ and show $w_j= 0$ for all $j\ge 0$ by using an ODE argument. 

In our proof, however, some adaptations of the arguments to our problem are required as follows:
\begin{itemize} 
\item We can use the same Poho\v{z}aev identities as in the case $\alpha=\infty$, since the differential equations are identical. However, more suitable analysis near the origin is necessary to establish uniqueness due to the singular boundary condition. Similar modifications were done in \cite{Fuka21} for the nonlinear scalar field equations with singular external potentials. 

\item To apply the arguments of \cite{KabeTana99}, we need to show the existence of positive ground states for the perturbed equation \eqref{eq:elldel}. For this purpose, the following claim is useful for the unperturbed equation \eqref{eq:sp} (see \cite{FukaGeorIked22, GeorMichScan24}):
\begin{equation} \label{eq:1.15} 
f+cG_\lambda \in H_\alpha^1(\R^2) \text{ is a ground state} \implies \text{so is }|f|+|c|G_\lambda. 
\end{equation} 
However, this does not hold for \eqref{eq:elldel} due to the perturbed potential term, which depends on the small parameter $\varepsilon > 0$. To address this issue, we construct the fundamental solution $G_{\lambda, \varepsilon}$ of the perturbed Helmholtz equation (Lemma~\ref{lem:5.4}) and use the modified claim \eqref{eq:1.15}, where $G_\lambda$ is replaced by $G_{\lambda, \varepsilon}$ (Proof of Lemma~\ref{lem:5.7}). See also \cite{BoniGall25} for the similar argument in the case involving the Coulomb potential and a point interaction.

\item In the case without point interaction $(\alpha = \infty)$, 
$L_\lambda^+$ has additional elements $\partial_j u_\lambda$ composing its kernel in the whole function space $H^1(\R^2)$ due to translation invariance. In the case with point interaction $(\alpha \in \R)$, such possibilities are excluded in view of the singularity at the origin (see Case 2 in the Proof of Theorem~\ref{thm:2}). 
\end{itemize}

We remark that the same problem can be considered for nonlinear scalar field equations with a point interaction in the three dimensional case with $1<p<2$ (see \cite{AdamBoniCarlTent22-3d} for the existence and symmetry for the ground states). The corresponding boundary value problem of the ordinary differential equation is 
\begin{equation} \label{eq:ODE3d}
    \left\{\begin{alignedat}{2}
   &\mathopen{}
    - u''
    -\frac{2}{r} u' 
    +\lambda u 
    -u^p 
    =0,& \quad
   &r>0,
\\ &\mathopen{}
    u(r)
    =y_u\Bigl(\frac{1}{4\pi|x|}
    +\alpha\Bigr)+o(1)&\quad
   &\text{as }r\downarrow 0
    \text{ for some }y_u>0,
\\ &\mathopen{}u(r)\to 0&\quad
   &\text{as }r\to\infty.
    \end{alignedat}\right.
\end{equation}
However, our argument does not work to prove the uniqueness of positive solution for \eqref{eq:ODE3d}. Indeed, the positivity \eqref{eq:Jposi} of the Poho\v{z}aev function $J[u](r)$ is crucial, but we can show $J[u](r)\to-\infty$ as $r\downarrow 0$ in the case of \eqref{eq:ODE3d}. See Remark~\ref{rem:3d} for more details.

The rest of this paper is organized as follows. In Section~\ref{sec:2}, we state several properties of the solutions of \eqref{eq:sp} and \eqref{eq:ODE}. In Section~\ref{sec:3}, we prove Theorem~\ref{thm:1} by using the Poho\v{z}aev identities. In Section~\ref{sec:NDrad}, we prove the nondegeneracy in the radial function space $H_{\alpha, \rad}^1(\R^2)$. In Section~\ref{sec:NDwhole}, we prove the nondegeneracy in the whole function space $H_\alpha^1(\R^2)$ and completes the proof of Theorem~\ref{thm:2}. Hereafter, for a function space $X$, we put the subscript `$\mathrm{r}$' as $X_\mathrm{r}$ to denote the subspace of $X$ consisting of radially symmetric functions. Furthermore, we identify a radially symmetric function on $\R^2$ with a function on $[0, \infty)$ as $u(x)=u(|x|)=u(r)$ without further notice.

\section{Preliminaries}\label{sec:2}

In this section, we prepare several lemmas about the solutions of \eqref{eq:sp}. In what follows, we assume $p>1$, $\alpha\in\R$, and $\lambda>|e_\alpha|$.

\begin{lemma} \label{lem:2.1}
Let $u$ be a positive radial solution of \eqref{eq:sp}. Then $u(r)$ belongs to $C^2(0, \infty)$ and is a positive solution of \eqref{eq:ODE}. 
\end{lemma}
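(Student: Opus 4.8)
The plan is to exploit the operator-domain structure \eqref{eq:defpi}. Since $u$ is a positive (hence real-valued) radial solution of \eqref{eq:sp}, it lies in $\mathcal{A}_\lambda\subset D(-\Delta_\alpha)$, so I would first write $u=f+cG_\lambda$ with $f\in H^2(\R^2)$ and $c=f(0)/(\alpha+\beta(\lambda))$; here $f=u-cG_\lambda$ is real and radial because $u$ and $G_\lambda$ are. Substituting this decomposition into \eqref{eq:sp} and using the second line of \eqref{eq:defpi}, the singular part drops out of the range of $-\Delta_\alpha+\lambda$, and I obtain the $L^2$-identity $(-\Delta+\lambda)f=|u|^{p-1}u=u^p$.

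Next I would establish the regularity and the differential equation away from the origin. On $\R^2\setminus\{0\}$ the Green function $G_\lambda=\frac{1}{2\pi}K_0(\sqrt{\lambda}\,|\cdot|)$ is smooth, so $u$ and $f$ share the same local regularity there; since $f\in H^2\hookrightarrow C^{0,\gamma}$ and $u>0$ is bounded away from zero on compact subsets of $(0,\infty)$, a standard Schauder bootstrap on $(-\Delta+\lambda)f=u^p$ upgrades $u$ to $C^\infty(\R^2\setminus\{0\})$, in particular $u(r)\in C^2(0,\infty)$. Because $(-\Delta+\lambda)G_\lambda=0$ classically for $x\neq0$, the identity $(-\Delta+\lambda)u=u^p$ then holds pointwise for $r>0$, which in radial coordinates is exactly the ODE $-u''-\frac1r u'+\lambda u-u^p=0$.

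It then remains to read off the two boundary conditions together with the sign of $y_u$. Using continuity of $f$ (so $f(r)=f(0)+o(1)$) and the expansion \eqref{eq:Gexpdecay}, a direct computation yields $u(r)=y_u(-\frac{1}{2\pi}\log r+\alpha)+o(1)$ with $y_u=c$, the constant term matching precisely because $f(0)-c\beta(\lambda)=c\alpha$; and since $f\in H^2\hookrightarrow C_0$ while $G_\lambda$ decays exponentially, $u(r)\to0$ as $r\to\infty$.

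The one point requiring genuine care — and the main obstacle — is upgrading $y_u\geq0$ to $y_u>0$. As $-\log r\to+\infty$, positivity of $u$ near the origin forces $y_u\geq0$, and the degenerate case $y_u=0$ means $c=f(0)=0$, i.e. $u=f\in H^2$ is a regular function with $u(0)=0$. I would exclude this by the strong maximum principle: $u\geq0$ satisfies $(-\Delta+\lambda)u=u^p\geq0$ with $\lambda>0$, so the interior zero $u(0)=0$ would force $u\equiv0$, contradicting nontriviality. Hence $y_u>0$, and $u$ is a positive solution of \eqref{eq:ODE}.
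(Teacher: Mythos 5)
Your proof is correct and follows essentially the same route as the paper: decompose $u=f+cG_\lambda$ via the operator domain $D(-\Delta_\alpha)$, use elliptic regularity to get $u\in C^2(0,\infty)$ and the ODE away from the origin, and read off the boundary behavior from the expansion \eqref{eq:Gexpdecay} of $G_\lambda$ together with the decay of the $H^2$-part. Your closing step ruling out $y_u=0$ (i.e.\ $f(0)=0$) by the strong maximum principle is in fact more careful than the paper's proof, which simply sets $y_u:=f(0)(\alpha+\beta(\lambda))^{-1}>0$ without comment.
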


\begin{proof}
By a elliptic regularity argument, e.g., \cite[Theorem~11.7]{LiebLoss01}, we see that 
$
    \mathcal{A}_\lambda\subset C^2(\R^2\setminus\{0\}).
$
Thus, $u\in C^2(0, \infty)$. 

Since $\mathcal{A}_\lambda\subset D(-\Delta_\alpha)$, we can decompose $u=f+f(0)(\alpha+\beta(\lambda))^{-1}G_\lambda$ with $f\in H^2(\R^2)$. The action of $-\Delta_\alpha+\lambda$ can be rewritten as 
\begin{align*}
    (-\Delta_\alpha+\lambda)u
    =(-\Delta+\lambda)f
   &=(-\Delta+\lambda)\Bigl(u-\frac{f(0)}{\alpha+\beta(\lambda)}G_\lambda\Bigr)
\\ &=(-\Delta+\lambda)u-\frac{f(0)}{\alpha
    +\beta(\lambda)}\delta_0. 
\end{align*}
In particular, 
\[  (-\Delta_\alpha+\lambda)u
    =(-\Delta+\lambda)u\quad 
    \text{for }x\ne 0.  \]
This implies that $u$ satisfies $-u''-u'/r+\lambda u-u^p=0$, $r>0$. 

Moreover, by \eqref{eq:Gexpdecay}, we can rewrite the boundary condition for $u$ at $x=0$  as
\begin{align*}
\notag 
    u(x)
   &=f(x)+\frac{f(0)}{\alpha+\beta(\lambda)}G_\lambda(x)
\\ &=\frac{f(0)}{\alpha+\beta(\lambda)}\Bigl(-\frac{1}{2\pi}\log |x|+\alpha\Bigr)
    +o(1)\quad 
    \text{as }x\to 0.
\end{align*}
We note that $\alpha+\beta(\lambda)>0$ because of $\lambda>|e_\alpha|$. Therefore, if we put $y_u:=f(0)(\alpha+\beta(\lambda))^{-1}>0$, then $u(r)=y_u(-(2\pi)^{-1}\log r+\alpha)+o(1)$ as $r\downarrow 0$.

Finally, since $u\in D(-\Delta_\alpha)$, we have $u(r)\to 0$ as $r\to\infty$. This completes the proof.
\end{proof}

In what follows in this section, we investigate the properties of the solution of 
\begin{alignat}{2} \label{eq:ODE1}
   &\mathopen{}
    - u''
    -\frac{1}{r} u' 
    +\lambda u 
    -u^p 
    =0,& \quad
   &r>0,
\\  \label{eq:ODE2}&\mathopen{}
    u(r)
    =y_u\Bigl(-\frac{1}{2\pi}\log r
    +\alpha\Bigr)+o(1)&\quad
   &\text{as }r\downarrow 0
    \text{ for some }y_u>0,
\\  \label{eq:ODE3}
   &\mathopen{}u(r)\to 0&\quad
   &\text{as }r\to\infty.
\end{alignat}

\begin{lemma}
Let $u\in C^2(0, \infty)$ be a positive solution of \eqref{eq:ODE1} satisfying \eqref{eq:ODE2}. Then
\begin{align}\label{eq:2.1}
    u'(r) 
   &=-\frac{y_u}{2\pi r}
    +\frac{1}{r}\int_0^r s\bigl(\lambda u(s)- u(s)^p\bigr)\,ds,
\\ \label{eq:n2.2} 
    u(r)
   &=y_u\Bigl(-\frac{1}{2\pi}\log r+\alpha\Bigr)
    +\int_0^r\int_{0}^t\frac{s}{t}\bigl(\lambda u(s)- u(s)^p\bigr)\,ds\,dt
\end{align}
for all $r>0$. In particular, 
\begin{equation}\label{eq:n2.3} 
    u'(r)
    =-\frac{y_u}{2\pi r}+o(1)
\end{equation}
as $r\downarrow 0$.
\end{lemma}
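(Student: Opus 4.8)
The plan is to recast the radial equation \eqref{eq:ODE1} in divergence form and integrate it twice, using the boundary condition \eqref{eq:ODE2} to pin down the two constants of integration. Writing $-u''-u'/r=-r^{-1}(ru')'$, equation \eqref{eq:ODE1} is equivalent to $(ru')'=r(\lambda u-u^p)$ for $r>0$. I would first record the size of the right-hand side near the origin: by \eqref{eq:ODE2} we have $u(s)=-\frac{y_u}{2\pi}\log s+O(1)$ as $s\downarrow 0$, and since $p>1$ the dominant contribution gives $|s(\lambda u(s)-u(s)^p)|\lesssim s|\log s|^p$, which is integrable on $(0,1)$. Hence $\int_0^r s(\lambda u(s)-u(s)^p)\,ds$ converges for every $r>0$; this elementary bound underlies every subsequent convergence claim.

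Next I would integrate the divergence form from a small $r$ to a fixed $r_0>0$, obtaining $ru'(r)=r_0u'(r_0)-\int_r^{r_0}s(\lambda u-u^p)\,ds$. Letting $r\downarrow 0$ and using the integrability above shows that $c:=\lim_{r\downarrow 0}ru'(r)$ exists, and reorganising yields $ru'(r)=c+\int_0^r s(\lambda u-u^p)\,ds$ for all $r>0$. Dividing by $r$ and integrating once more over $(\varepsilon,r)$ gives $u(r)-u(\varepsilon)=c\log(r/\varepsilon)+\int_\varepsilon^r t^{-1}\int_0^t s(\lambda u-u^p)\,ds\,dt$; the bound $\int_0^t s|\log s|^p\,ds=O(t^2|\log t|^p)$ makes the $t$-integrand $O(t|\log t|^p)$, so the double integral converges as $\varepsilon\downarrow 0$.

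The constants are then fixed by matching the logarithmic singularity. Since $u(\varepsilon)=-\frac{y_u}{2\pi}\log\varepsilon+y_u\alpha+o(1)$, the quantity $u(\varepsilon)-c\log\varepsilon=-\bigl(\frac{y_u}{2\pi}+c\bigr)\log\varepsilon+y_u\alpha+o(1)$ must converge as $\varepsilon\downarrow 0$ (all other terms in the previous identity do), which forces $c=-\frac{y_u}{2\pi}$ and identifies the limit as $y_u\alpha$. Substituting these values back produces \eqref{eq:n2.2}, and inserting $c=-\frac{y_u}{2\pi}$ into $ru'(r)=c+\int_0^r s(\lambda u-u^p)\,ds$ gives \eqref{eq:2.1}. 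Finally \eqref{eq:n2.3} follows at once, because $\frac1r\int_0^r s(\lambda u-u^p)\,ds=O(r|\log r|^p)=o(1)$ as $r\downarrow 0$.

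The main subtlety is that the boundary condition \eqref{eq:ODE2} controls $u$ but provides no a priori information on $u'$. The crux is therefore to establish rigorously that $\lim_{r\downarrow 0}ru'(r)$ exists — achieved by integrating from the regular side $r_0$ rather than toward the singularity — and then to extract the coefficient of $\log\varepsilon$ in order to match $c$ against the prescribed logarithmic singularity. Everything else is routine convergence bookkeeping based on the single estimate $|s(\lambda u-u^p)|\lesssim s|\log s|^p$.
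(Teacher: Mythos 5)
Your proposal is correct and follows essentially the same route as the paper: both rewrite the equation in divergence form $(ru')'=r(\lambda u-u^p)$, use the integrability of $s|\log s|^p$ near the origin to show that $\lim_{r\downarrow 0}ru'(r)$ exists, integrate a second time, and then identify this limit as $-y_u/(2\pi)$ by matching the coefficient of the logarithmic singularity against \eqref{eq:ODE2}. The only cosmetic difference is that the paper pins down the constant by fixing $r=1$ and bounding the double integral uniformly in $\varepsilon$, while you argue directly that $u(\varepsilon)-c\log\varepsilon$ must converge; these are the same singularity-matching argument.
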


\begin{proof}
We rewrite \eqref{eq:ODE1} as 
\[  \frac{d}{ds}(su'(s))
    =s(\lambda u(s)-u(s)^p),\quad 
    s>0.    \]
Integrating this on $[\varepsilon, t]$, we have 
\begin{align*}
    tu'(t)
    -\varepsilon u'(\varepsilon)
    =\int_{\varepsilon}^t s\bigl(\lambda u(s)-u(s)^p\bigr)\,ds. 
\end{align*}
Since $s(\log s)^q\in L^1(0, 1)$ for any $q\ge 0$, the limit of the right-hand side as $\varepsilon\downarrow 0$ exists, and so does $l:=\lim_{\varepsilon\downarrow 0}\varepsilon u'(\varepsilon)$. Thus, we have
\begin{align}\label{eq:2.2}
    tu'(t)
    -l
    =\int_{0}^t s\bigl(\lambda u(s)-u(s)^p\bigr)\,ds. 
\end{align}
Moreover, multiplying this by $1/t$ and integrating it over $[\varepsilon, r]$, we obtain
\begin{align} \label{eq:2.3}
     u(r)
    - u(\varepsilon)
    -l(\log r-\log \varepsilon)
   &=\int_\varepsilon^r\int_{0}^t\frac{s}{t}\bigl(\lambda u(s)-u(s)^p\bigr)\,ds\,dt.
\end{align}
If we take $r=1$, this equality is rewritten as 
\[  u(1)
    - u(\varepsilon)
    +l\log\varepsilon
    =\int_\varepsilon^1\int_{0}^t\frac{s}{t}\bigl(\lambda u(s)-u(s)^p\bigr)\,ds\,dt  \]
and its right hand side is estimated as
\begin{align*}
    \Bigl|\int_\varepsilon^1\int_{0}^t\frac{s}{t}\bigl(\lambda u(s)- u(s)^p\bigr)\,ds\,dt\Bigr|
    \le \int_0^1\lambda u(s)+ u(s)^p\,ds<\infty.
\end{align*}
Since $u(\varepsilon)=-y_u(2\pi)^{-1}\log \varepsilon +O(1)$ as $\varepsilon\downarrow 0$, we have $l=-y_u(2\pi)^{-1}$. Thus, \eqref{eq:2.1} follows from \eqref{eq:2.2}, and \eqref{eq:n2.2} follows from \eqref{eq:2.3} and $u(\varepsilon)=-y_u(2\pi)^{-1}(\log \varepsilon +\alpha) +o(1)$ as $\varepsilon\downarrow 0$.
\end{proof}

\begin{lemma} \label{eq:GronUniq}
Let $ u, v\in C^2(0, \infty)$ be two positive solutions of \eqref{eq:ODE1} satisfying \eqref{eq:ODE2}. If $y_u=y_v$, then $ u= v$. 
\end{lemma}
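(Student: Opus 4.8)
The plan is to subtract the two integral identities \eqref{eq:n2.2} for $u$ and $v$, run a Gronwall-type argument on a small interval near the origin (where the logarithmic singularity lives), and then propagate the coincidence to all of $(0,\infty)$ by the standard uniqueness theory for regular ODEs.

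First I would set $w:=u-v$. Since $y_u=y_v$, subtracting \eqref{eq:n2.2} for $u$ and $v$ kills the singular term $y_u\bigl(-(2\pi)^{-1}\log r+\alpha\bigr)$ entirely, leaving the integral equation
\[
    w(r)=\int_0^r\int_0^t\frac{s}{t}\Bigl[\lambda w(s)-\bigl(u(s)^p-v(s)^p\bigr)\Bigr]\,ds\,dt .
\]
Because the integrand is absolutely integrable near the origin (this is precisely the content of $s(\log s)^q\in L^1(0,1)$ already exploited in the previous lemma), $w$ extends continuously to $r=0$ with $w(0)=0$; in particular $w$ is bounded on every interval $(0,R]$.

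Next I would convert this into a Gronwall inequality. By the mean value theorem, $u(s)^p-v(s)^p=p\,\theta(s)^{p-1}w(s)$ for some $\theta(s)$ between $u(s)$ and $v(s)$, so with $M(s):=\lambda+p\max\{u(s),v(s)\}^{p-1}$ one obtains
\[
    |w(r)|\le\int_0^r\int_0^t\frac{s}{t}M(s)\,|w(s)|\,ds\,dt .
\]
The crucial observation is that, by the asymptotics \eqref{eq:ODE2}, both $u$ and $v$ grow only like $|\log s|$ as $s\downarrow 0$, so $M(s)=O(|\log s|^{p-1})$ and hence $sM(s)\in L^1(0,1)$. Writing $\phi(r):=\sup_{0<s\le r}|w(s)|$ and using $|w(s)|\le\phi(r)$ in the inner integral gives $\phi(r)\le C(r)\,\phi(r)$, where $C(r):=\int_0^r\int_0^t\frac{s}{t}M(s)\,ds\,dt\to0$ as $r\downarrow0$. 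Choosing $R>0$ with $C(R)<1$ forces $\phi(R)=0$, that is, $w\equiv0$ on $(0,R]$.

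Finally I would upgrade this local coincidence to a global one. For $r>0$ the equation \eqref{eq:ODE1} is a regular second-order ODE whose right-hand side $-u'/r+\lambda u-u^p$ is locally Lipschitz in $(u,u')$ on the region $u>0$ (positivity of $u$ and $v$ is used here to keep $u^p$ smooth). Since $u$ and $v$ agree on $(0,R]$, they share the same value and derivative at $r=R$, and the standard uniqueness theorem for ODEs propagates $u=v$ to the whole interval $(0,\infty)$. The main obstacle is the first step: the coefficient $u^{p-1}$ blows up logarithmically at the origin, so the kernel of the integral equation is singular, and the argument closes only because the geometric weight $s$ coming from the radial Laplacian exactly tames this logarithmic growth, making $sM(s)$ integrable. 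This is where the two-dimensional (logarithmic) nature of the problem is genuinely used.
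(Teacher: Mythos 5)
Your proof is correct and follows essentially the same route as the paper: subtract the integral representation \eqref{eq:n2.2} so that $y_u=y_v$ cancels the logarithmic singularity, then absorb the difference $|u-v|$ through a kernel of the type $s(\log s)^q$, which is integrable near the origin. The only cosmetic difference is the closing step: the paper swaps the order of integration and applies Gronwall's inequality directly on an arbitrary interval $(0,R]$, whereas you use a kernel-smallness (contraction) argument on a small interval near the origin and then propagate $u=v$ by standard ODE uniqueness away from $r=0$; both are valid.
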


\begin{proof}
Let $R>0$. By \eqref{eq:n2.2} and $y_u=y_v$ we obtain for $0<r\le R$ the estimate 
\begin{align*}
   &|u(r)- v(r)|
   \le\int_0^r\int_{0}^t\frac{s}{t}\big(\lambda |u(s)- v(s)|+|u(s)^p- v(s)^p|\big)\,ds\,dt
\\ &=\int_0^r\int_s^r\frac{s}{t}\big(\lambda|u(s)- v(s)|
   +|u(\tau)^p- v(\tau)^p|\big)\,dt\,ds
\\ &\lesssim \int_0^r\Big(\int_s^{R}\frac{dt}{t}\Big)s(1+u(s)^{p-1}+v(s)^{p-1})|u(s)-v(s)|\,ds.
\end{align*}
We note that $h(s):=(\int_s^{R}\frac{dt}{t})s(1+ u(s)^{p-1}+u(s)^{p-1})$ and $w(s):=|u(s)-v(s)|$ are extended as continuous functions on $[0, R]$ with $w(0)=0$. Thus, Gronwall's inequality implies $u= v$ on $(0, R]$. Since $R>0$ is taken to be arbitrary, we have $u= v$ on $(0, \infty)$.
\end{proof}

\begin{lemma} \label{lem:w'}
Let $ u, v\in C^2(0, \infty)$ be two positive solutions of \eqref{eq:ODE1} satisfying \eqref{eq:ODE2}. Then
\begin{equation}\label{eq:expw}
    \frac{d}{dr}\Bigl(\frac{v(r)}{u(r)}\Bigr)
    =-\frac{1}{ru(r)^2}\int_0^r s\Bigl(\Bigl(\frac{v(s)}{u(s)}\Bigr)^{p-1}-1\Bigr)u(s)^p v(s)\,ds
\end{equation}
for all $r>0$. In particular, 
\begin{equation} \label{eq:wo}
    \frac{d}{dr}\Bigl(\frac{v(r)}{u(r)}\Bigr)=o(1)\quad 
    \text{as }
    r\downarrow 0. 
\end{equation}
\end{lemma}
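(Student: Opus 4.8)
The plan is to study the Wronskian-type quantity $W := v'u - vu'$, since the quotient rule immediately gives $\frac{d}{dr}(v/u) = W/u^2$. First I would differentiate $W$ and substitute the second derivatives $u'' = -u'/r + \lambda u - u^p$ and $v'' = -v'/r + \lambda v - v^p$ coming from \eqref{eq:ODE1}. In $W' = v''u - vu''$ the $\lambda$-terms cancel, and one is left with the linear first-order relation $W' = -W/r + uv(u^{p-1}-v^{p-1})$; multiplying by the integrating factor $r$ converts this into the clean identity $(rW)' = r\,uv(u^{p-1}-v^{p-1})$.

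Next I would integrate this identity down to the origin. The right-hand side is integrable near $0$: by the boundary condition \eqref{eq:ODE2} both $u$ and $v$ grow only logarithmically, so $s\,uv(u^{p-1}-v^{p-1}) = O\!\left(s|\log s|^{p+1}\right) \in L^1(0,1)$. Consequently $rW(r)$ has a finite limit as $r\downarrow 0$, equal to $\lim_{\varepsilon\downarrow 0}\varepsilon W(\varepsilon)$, and the crux of the argument—what I expect to be the main obstacle—is to prove that this boundary term vanishes. This is not automatic, because near the origin $u,v \sim -\frac{y_u}{2\pi}\log r$ blow up while $u',v' \sim -\frac{y_u}{2\pi r}$ blow up like $1/r$, so each of $\varepsilon v'u$ and $\varepsilon vu'$ separately diverges like $\log\varepsilon$. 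I would therefore insert the sharp expansions $\varepsilon u'(\varepsilon) = -\frac{y_u}{2\pi}+o(1)$ from \eqref{eq:2.1}--\eqref{eq:n2.3} together with $u(\varepsilon) = y_u\bigl(-\frac{1}{2\pi}\log\varepsilon+\alpha\bigr)+o(1)$ from \eqref{eq:n2.2} (and the analogues for $v$), and verify that both the diverging $\log\varepsilon$ contributions and the constant contributions cancel exactly, leaving $\varepsilon W(\varepsilon) = o(1)$.

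With $\lim_{\varepsilon\downarrow 0}\varepsilon W(\varepsilon)=0$ in hand, integration yields $rW(r) = \int_0^r s\,u(s)v(s)\bigl(u(s)^{p-1}-v(s)^{p-1}\bigr)\,ds$, and dividing by $u^2$ gives $\frac{d}{dr}(v/u) = W/u^2$. The stated form \eqref{eq:expw} then follows from the purely algebraic rewriting $uv(u^{p-1}-v^{p-1}) = -\bigl((v/u)^{p-1}-1\bigr)u^p v$. Finally, for the asymptotic \eqref{eq:wo} I would feed the elementary bound $\int_0^r s|\log s|^{p+1}\,ds = O\!\left(r^2|\log r|^{p+1}\right)$ into this integral representation and use $u(r)^2 \sim \bigl(\frac{y_u}{2\pi}\bigr)^2|\log r|^2$ to conclude $\frac{d}{dr}(v/u) = O\!\left(r|\log r|^{p-1}\right) = o(1)$ as $r\downarrow 0$.
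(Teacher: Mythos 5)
Your proposal is correct and matches the paper's argument essentially step for step: the paper likewise differentiates the scaled Wronskian $r(v'u-vu')$, uses \eqref{eq:ODE1} to reduce it to $-s\bigl((v/u)^{p-1}-1\bigr)u^pv$, kills the boundary term at the origin by exactly the cancellation of the $\log\varepsilon$ and constant contributions via \eqref{eq:ODE2} and \eqref{eq:n2.3} (obtaining $v'(\varepsilon)u(\varepsilon)-v(\varepsilon)u'(\varepsilon)=o(\varepsilon^{-1})$), and then integrates and divides by $u^2$. Your explicit $O\bigl(r|\log r|^{p-1}\bigr)$ bound for \eqref{eq:wo} is slightly more detailed than the paper, which simply notes that \eqref{eq:wo} follows from \eqref{eq:expw}.
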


\begin{proof}
By \eqref{eq:ODE1}, We have
\begin{align}
   &\notag 
    \frac{d}{ds}\Bigl(s\bigl(v'(s)u(s)-v(s)u'(s)\bigr)\Bigr)
\\ &\notag
    =v'(s)u(s)-v(s)u'(s)+s\bigl(v''(s)u(s)-v(s)u''(s)\bigr)
\\ &=s\big(u(s)^{p-1}-v(s)^{p-1}\big)u(s)v(s)
    =-s\Big(\Bigl(\frac{v(s)}{u(s)}\Bigr)^{p-1}-1\Big)u(s)^pv(s)\label{eq:orv}
\end{align}
Then by \eqref{eq:ODE2} and \eqref{eq:n2.3} we have 
\begin{align*}
    v'(\varepsilon)u(\varepsilon)-v(\varepsilon)u'(\varepsilon)
   &=\begin{multlined}[t]
    \Bigl(-\frac{y_v}{2\pi \varepsilon}+o(1)\Bigr)\cdot y_u\Bigl(-\frac{1}{2\pi}\log \varepsilon+\alpha+o(1)\Bigr)
\\  -y_v\Bigl(-\frac{1}{2\pi}\log \varepsilon+\alpha+o(1)\Bigr)\cdot\Bigl(-\frac{y_v}{2\pi \varepsilon}+o(1)\Bigr)
    \end{multlined}
\\ &=o(\varepsilon^{-1})\quad 
    \text{as }\varepsilon\downarrow 0.
\end{align*}  
Therefore, integrating \eqref{eq:orv} on $[\varepsilon, r]$ and taking the limit $\varepsilon\downarrow 0$, we obtain 
\[  r\bigl(v'(r)u(r)-v(r)u'(r)\bigr)
    =-\int_0^r s\Bigl(\Bigl(\frac{v(s)}{u(s)}\Bigr)^{p-1}-1\Bigr)u(s)^pv(s)\,ds.   \]
From this, we have \eqref{eq:expw}. The estimate \eqref{eq:wo} follows from \eqref{eq:expw}.
\end{proof}

\begin{lemma} \label{lem:uu'exp}
Let $u\in C^2(0, \infty)$ be a positive solution of \eqref{eq:ODE1} satisfying \eqref{eq:ODE3}. Then there exist $\varepsilon>0$ and $C>0$ such that 
\[  |u(r)|+|u'(r)|
    \le Ce^{-\varepsilon r},\quad 
    r>1.  \]
\end{lemma}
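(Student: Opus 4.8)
The plan is to prove the decay of $u$ first by a comparison (maximum principle) argument, and then to deduce the decay of $u'$ from the equation by integrating against a suitable kernel.

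Since $u(r)\to 0$ as $r\to\infty$ and $p>1$, there is $R_0>1$ such that $u(r)^{p-1}\le \lambda/2$ for all $r\ge R_0$. Writing \eqref{eq:ODE1} as $-u''-r^{-1}u'+\tfrac{\lambda}{2}u=(u^{p-1}-\tfrac{\lambda}{2})u\le 0$ on $[R_0,\infty)$, we see that $u$ is a subsolution of the operator $\mathcal{L}:=-\partial_r^2-r^{-1}\partial_r+\tfrac{\lambda}{2}$ there. To construct a supersolution I would take $\phi(r):=Ae^{-\varepsilon r}$ with $0<\varepsilon\le\sqrt{\lambda/2}$ and $A>0$; a direct computation gives
\[
    \mathcal{L}\phi=Ae^{-\varepsilon r}\Bigl(-\varepsilon^2+\frac{\varepsilon}{r}+\frac{\lambda}{2}\Bigr)>0,\qquad r>0,
\]
so $\phi$ is a strict supersolution. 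Choosing $A$ so large that $\phi(R_0)\ge u(R_0)$ and setting $w:=u-\phi$, we have $\mathcal{L}w\le 0$ on $(R_0,\infty)$, $w(R_0)\le 0$, and $w(r)\to 0$ as $r\to\infty$. Because the zeroth–order coefficient of $\mathcal{L}$ is positive, $w$ cannot attain a positive interior maximum (at such a point $w'=0$ and $w''\le 0$ would force $\mathcal{L}w\ge\tfrac{\lambda}{2}w>0$); combined with the boundary behaviour this forces $w\le 0$, i.e.\ $0<u(r)\le Ae^{-\varepsilon r}$ on $[R_0,\infty)$. Since $u$ is continuous, hence bounded, on $[1,R_0]$, enlarging the constant yields $u(r)\le Ce^{-\varepsilon r}$ for all $r\ge 1$.

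For the derivative I would integrate the identity $\frac{d}{ds}\bigl(su'(s)\bigr)=s(\lambda u(s)-u(s)^p)$, which was already used to obtain \eqref{eq:2.1}. Because $u$ now decays exponentially, the map $s\mapsto s(\lambda u(s)-u(s)^p)$ is integrable on $[1,\infty)$, so $\ell:=\lim_{s\to\infty}su'(s)$ exists and is finite. Applying the $\tfrac{\ast}{\infty}$ form of L'Hôpital's rule to $u(s)/\log s$ gives $\lim_{s\to\infty}u(s)/\log s=\lim_{s\to\infty}su'(s)=\ell$; since $u$ is bounded, $\ell=0$. Letting the upper limit tend to infinity in the integrated identity therefore produces
\[
    u'(r)=-\frac{1}{r}\int_r^\infty s\bigl(\lambda u(s)-u(s)^p\bigr)\,ds,\qquad r>0,
\]
and inserting the bound $|\lambda u(s)-u(s)^p|\le C'e^{-\varepsilon s}$ together with $\int_r^\infty se^{-\varepsilon s}\,ds=(\varepsilon^{-1}r+\varepsilon^{-2})e^{-\varepsilon r}$ yields $|u'(r)|\le C'(\varepsilon^{-1}+\varepsilon^{-2}r^{-1})e^{-\varepsilon r}\le Ce^{-\varepsilon r}$ for $r\ge 1$, completing the proof.

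The main obstacle I anticipate is the rigorous justification of the comparison step on the half–line: one must rule out both that the supremum of $w$ escapes to $+\infty$ (excluded by the exponential supersolution) and that it is approached only in the limit $r\to\infty$ (excluded by $w(r)\to 0$ together with $w(R_0)\le 0$ and continuity). The second delicate point is the verification that $ru'(r)\to 0$, which is exactly where the boundedness of $u$ is genuinely needed, since a priori the limit $\ell$ of $ru'(r)$ could be any real number and only the logarithmic growth it would otherwise force is incompatible with $u\to 0$.
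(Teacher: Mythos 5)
Your proof is correct, but it takes a genuinely different route from the paper, which offers no argument at all and simply cites \cite[Lemma~2]{BereLion83}. Your two-step argument is a sound, self-contained rendering of the classical technique, adapted directly to the radial ODE with the $1/r$ drift term: first, comparison of $u$ with the strict supersolution $\phi(r)=Ae^{-\varepsilon r}$, $0<\varepsilon\le\sqrt{\lambda/2}$, of $\mathcal{L}=-\partial_r^2-r^{-1}\partial_r+\lambda/2$ on $[R_0,\infty)$, where $R_0$ is chosen via \eqref{eq:ODE3} so that $u^{p-1}\le\lambda/2$; second, integration of $(su'(s))'=s(\lambda u(s)-u(s)^p)$ to obtain $u'(r)=-r^{-1}\int_r^\infty s(\lambda u(s)-u(s)^p)\,ds$ and hence the derivative bound. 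The two delicate points you flag are handled correctly: since $w=u-\phi$ is continuous with $w(R_0)\le0$ and $w\to0$ at infinity, a positive supremum would be attained at an interior point, where $w'=0$ and $w''\le0$ force $\mathcal{L}w\ge(\lambda/2)w>0$, contradicting $\mathcal{L}w\le0$; and your identification $\lim_{s\to\infty}su'(s)=0$ via the $\ast/\infty$ form of L'H\^opital is valid (equivalently, a nonzero limit $\ell$ would force $u(s)\sim\ell\log s$, contradicting $u\to0$). What your approach buys is a fully explicit, elementary proof with a quantitative rate $\varepsilon\le\sqrt{\lambda/2}$ tailored to this equation; what the paper's citation buys is brevity and the generality of the Berestycki--Lions lemma, which covers general nonlinearities and all derivatives at once.
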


\begin{proof}
See \cite[Lemma~2]{BereLion83}.
\end{proof}

\section{Uniqueness of ground states} \label{sec:3}

In this section we prove Theorem~\ref{thm:1}. Firstly, we review the Poho\v{z}aev identities by \cite{ShioWata13}. We consider the more general ordinary differential equation:
\begin{equation} \label{eq:gODE}
    -u''
    -\frac{d-1}{r}u' 
    +g(r)u
    -h(r)u^p
    =0,\quad 
    r>0,
\end{equation}
where $d>1$, $g, h\in C^1(0, \infty)$, and $h\ge 0$.
For $u\in C^2(0, \infty)$ we define
\begin{align*}
    J[u](r)
    :=\begin{multlined}[t]
    \frac12a(r)u'(r)^2
    +b(r)u'(r)u(r) 
\\  +\frac12\bigl(c(r)-a(r)g(r)\bigr)u(r)^2
    +\frac{1}{p+1}a(r)h(r)u(r)^{p+1},\quad 
    r>0,
    \end{multlined}
\end{align*}
where $a(r)$, $b(r)$, and $c(r)$ will be defined later. If $u$ is a solution of \eqref{eq:gODE}, we differentiate $J[u]$ to have 
\begin{align*}
    J[u]'
   &=\begin{multlined}[t]
    \frac12a'(u')^2
    +au''u'
    +b'u'u
    +bu''u
    +b(u')^2
\\  +\frac12(c-ag)'u^2
    +(c-ag)u'u
    +\frac{1}{p+1}(ah)'u^{p+1}
    +ahu'u^{p}
    \end{multlined}
\\ &=\begin{multlined}[t]
    \frac12a'(u')^2
    +a\Bigl(-\frac{d-1}{r}u'+gu-hu^p\Bigr)u'
    +b'u'u
    +b\Bigl(-\frac{d-1}{r}u'+gu-hu^p\Bigr)u
\\  +b(u')^2
    +\frac12(c-ga)'u^2
    +(c-g a)u'u
    +\frac{1}{p+1}(ah)'u^{p+1}
    +ahu'u^{p}
    \end{multlined}
\\ &=A(r)(u')^2
    +B(r)u'u
    +C(r)u^2
    +D(r)u^{p+1},
\end{align*}
where 
\begin{align*}
    A(r)
    &:=\frac{1}{2}a'(r)-\frac{d-1}{r}a(r)+b(r),&
    B(r)
    &:=b'(r)-\frac{d-1}{r}b(r)+c(r),
\\  C(r)
    &:=b(r)g(r)
    +\frac12(c-ag)'(r),&
    D(r)
    &:=-b(r)h(r)+\frac{1}{p+1}(ah)'(r).
\end{align*}
We solve the system of differential equations $A=B=D=0$. From $A=0$ and $D=0$ we have
\[  \frac12a'(r) 
    -\frac{d-1}{r}a(r)
    +\frac{1}{p+1}\frac{(ah)'(r)}{h(r)}
    =0, \] 
which can be rewritten as 
\[  \bigl(\log \lvert a(r)\rvert\bigr)'
    =\Bigl(\log\bigl((r^{d-1})^\frac{2(p+1)}{p+3}h(r)^{-\frac{2}{p+3}}\bigr)\Bigr)'.  \]
Let $a(r)$ be its one solution:
\begin{equation}\label{eq:a(r)}
    a(r)
    :=(r^{d-1})^\frac{2(p+1)}{p+3}h(r)^{-\frac{2}{p+3}},\quad 
    r>0.
\end{equation}
Moreover, let $b(r)$ be the function determined by $A(r)\equiv 0$ as
\begin{align}
    \label{eq:b(r)}
    b(r) 
   &:=\frac{d-1}{r}a(r)-\frac12a'(r),
\end{align} 
and $c(r)$ be the function determined by $B(r)\equiv 0$ as
\begin{align}\label{eq:c(r)}
    c(r)
   &:=\frac{d-1}{r}b(r)-b'(r).
\end{align}
Moreover, we put
\begin{align}\label{eq:C(r)} 
    C(r) 
   &:=b(r)g(r)
    +\frac12(c-ag)'(r).
\end{align}
Then we have the following.

\begin{lemma}
Let $u\in C^2(0, \infty)$ be a positive solution of \eqref{eq:gODE} and let $a(r)$, $b(r)$, $c(r)$, and $C(r)$ be given by \eqref{eq:a(r)}--\eqref{eq:C(r)}. Then $J[u]'(r)=C(r)u(r)^2$ for all $r>0$. 
\end{lemma}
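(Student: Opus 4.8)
The plan is to obtain the identity by direct differentiation of $J[u]$, using the equation \eqref{eq:gODE} to eliminate the second derivative. Concretely, I apply the product rule to each of the four terms defining $J[u]$: differentiating $\tfrac12 a(u')^2$ produces $\tfrac12 a'(u')^2 + a u'' u'$, differentiating $b u' u$ produces $b' u' u + b u'' u + b (u')^2$, and the remaining two terms are handled analogously. At every occurrence of $u''$ I substitute $u'' = -\tfrac{d-1}{r}u' + g u - h u^p$ from \eqref{eq:gODE}. This is exactly the computation displayed immediately before the lemma; regrouping the outcome according to the monomials $(u')^2$, $u'u$, $u^2$, and $u^{p+1}$ yields
\[
    J[u]'(r) = A(r)(u')^2 + B(r)u'u + C(r)u^2 + D(r)u^{p+1},
\]
with $A$, $B$, $C$, $D$ as defined in the text.

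The remaining task is to verify that the coefficients $A$, $B$, and $D$ vanish identically under the choices \eqref{eq:a(r)}--\eqref{eq:c(r)}. The identity $A \equiv 0$ is immediate from the definition \eqref{eq:b(r)} of $b$, and $B \equiv 0$ is immediate from the definition \eqref{eq:c(r)} of $c$. For $D$, I substitute \eqref{eq:b(r)} into $D = -bh + \tfrac{1}{p+1}(ah)'$, divide through by $h$ (positive on the relevant range), and recognize the resulting first-order linear relation $\tfrac12 a' - \tfrac{d-1}{r}a + \tfrac{1}{p+1}(ah)'/h = 0$ as precisely the equation that \eqref{eq:a(r)} was constructed to solve. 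Hence $D \equiv 0$, and only the $C(r)u^2$ term survives, giving $J[u]'(r) = C(r)u(r)^2$ for all $r>0$.

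I expect no genuine obstacle: the statement is essentially a bookkeeping identity, and the substantive part of the computation is already carried out in the passage preceding the lemma. The only points requiring mild care are the correct collection of the two $u''$-terms, namely the $a u'$ coefficient coming from $\tfrac12 a(u')^2$ and the $b u$ coefficient coming from $b u' u$, since these are exactly the terms that, upon inserting the equation, generate the $g u$ and $h u^p$ contributions feeding into $C$ and $D$. One should also keep in mind the tacit assumption $h(r)>0$ needed both to form $a(r) = (r^{d-1})^{2(p+1)/(p+3)}h(r)^{-2/(p+3)}$ and to divide by $h$ when checking $D \equiv 0$; this is harmless in the intended application, where $h \equiv 1$.
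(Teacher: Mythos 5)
Your proof is correct and takes essentially the same route as the paper: the paper's justification for this lemma is precisely the displayed computation preceding it (differentiate $J[u]$, eliminate $u''$ via \eqref{eq:gODE}, regroup into $A(u')^2+Bu'u+Cu^2+Du^{p+1}$, and note that the choices \eqref{eq:a(r)}--\eqref{eq:c(r)} force $A=B=D=0$), which is exactly what you carry out. Your added remark that one tacitly needs $h>0$ (not merely $h\ge 0$) to form $a=(r^{d-1})^{2(p+1)/(p+3)}h^{-2/(p+3)}$ and to divide by $h$ in checking $D\equiv 0$ is a fair observation and is harmless in the paper's applications, where $h\equiv 1$ or $h_\varepsilon\ge 1$.
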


We now turn to our problem \eqref{eq:ODE1}. This corresponds to \eqref{eq:gODE} with $d=2$, $g(r)=\lambda$, and $h(r)=1$. In this case we have the following expression:
\begin{align*}
    a(r)
   &=r^q,\quad 
    q:=\frac{2(p+1)}{p+3}\in(1, 2),
\\  b(r)    
   &=\Bigl(1-\frac{q}{2}\Bigr)r^{q-1},
\\  c(r)
   &=\frac12(2-q)^2r^{q-2},
\\  C(r)
   &=-\frac12(2-q)^3r^{q-3}
   -(q-1)\lambda r^{q-1}.
\end{align*}
We note that $C(r)<0$ for all $r>0$, which is convenient in our proof.

\begin{lemma}
If $u\in C^2(0, \infty)$ is a positive solution of \eqref{eq:ODE1} satisfying \eqref{eq:ODE3}, then the following holds: 
\begin{align}
   &\label{eq:Jto0} 
   \lim_{r\to\infty}J[u](r)=0,
\\ \label{eq:Jposi} 
   &J[u](r)>0,\quad r>0.
\end{align}
\end{lemma}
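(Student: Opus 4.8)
The plan is to establish the two claims in order, using the Pohozaev function $J[u]$ together with its derivative formula $J[u]'(r) = C(r)u(r)^2$ from the preceding lemma, and crucially the sign fact $C(r) < 0$ for all $r > 0$.

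For the limit \eqref{eq:Jto0}, I would first recall that $J[u](r)$ is a sum of terms of the form $a(r)u'(r)^2$, $b(r)u'(r)u(r)$, $(c(r)-a(r)g(r))u(r)^2$, and $a(r)h(r)u(r)^{p+1}$, with $a(r)=r^q$, $b(r)\sim r^{q-1}$, and $c(r)\sim r^{q-2}$ where $q\in(1,2)$. By Lemma~\ref{lem:uu'exp}, both $u(r)$ and $u'(r)$ decay exponentially as $r\to\infty$, while the coefficients $a,b,c$ grow only polynomially. Hence each summand tends to $0$, and the exponential decay dominates the polynomial growth, giving $\lim_{r\to\infty}J[u](r)=0$. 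This step is routine.

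For the positivity \eqref{eq:Jposi}, the strategy I would use is a monotonicity argument driven by the sign of $C$. Since $J[u]'(r)=C(r)u(r)^2$ and $C(r)<0$ while $u(r)^2>0$, the function $J[u]$ is strictly decreasing on $(0,\infty)$. Combined with the limit $J[u](r)\to 0$ as $r\to\infty$ from \eqref{eq:Jto0}, a strictly decreasing function approaching $0$ at infinity must satisfy $J[u](r)>0$ for every finite $r>0$. Indeed, for any fixed $r_0$, monotonicity gives $J[u](r_0) > J[u](r) $ for all $r > r_0$, and letting $r \to \infty$ yields $J[u](r_0) \ge 0$; strict decrease then upgrades this to $J[u](r_0) > \lim_{r\to\infty} J[u](r) = 0$.

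The main obstacle I anticipate is not the monotonicity argument itself, which is clean, but verifying the limit \eqref{eq:Jto0} with full rigour near the structure of $J[u]$: one must confirm that the decomposition of $J[u]$ into the four terms and the explicit coefficient asymptotics are correctly matched against the exponential bound from Lemma~\ref{lem:uu'exp}. In particular, the cross term $b(r)u'(r)u(r)$ and the potential-type term $(c(r)-a(r)g(r))u(r)^2$ each involve a product of a polynomially growing coefficient with an exponentially small factor, and one should record explicitly that $r^{q-1}e^{-2\varepsilon r}\to 0$ and similar expressions vanish. The sign condition $C(r)<0$, already established in the computation preceding this lemma, is what makes the positivity essentially immediate once the limit is in hand; I would emphasize that this is precisely the feature that fails in the three-dimensional problem \eqref{eq:ODE3d}, as noted in the introduction.
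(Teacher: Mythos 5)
Your proposal is correct and follows exactly the paper's proof: \eqref{eq:Jto0} from the exponential decay in Lemma~\ref{lem:uu'exp} against the polynomial growth of $a$, $b$, $c$, and \eqref{eq:Jposi} from the strict monotonicity given by $J[u]'(r)=C(r)u(r)^2<0$ together with the vanishing limit at infinity. The only difference is that you spell out the details the paper leaves implicit.
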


\begin{proof}
\eqref{eq:Jto0} follows from Lemma~\ref{lem:uu'exp}. \eqref{eq:Jposi}  follows from \eqref{eq:Jto0}, $J[u]'(r)=C(r)u(r)^2$, and $C(r)<0$ for all $r>0$. 
\end{proof}

\begin{remark} \label{rem:3d}
In the three-dimensional case of \eqref{eq:ODE3d}, which corresponds to \eqref{eq:gODE} with \( d=3 \), \( 1<p<2 \), \( g(r) = \lambda \), and \( h(r) = 1 \), the function \( C(r) \) defined in \eqref{eq:C(r)} can be explicitly written as:
\[  C(r)
    =\frac12(3-q)(q-2)\Bigl(2-\frac{q}{2}\Bigr)r^{q-3}
    -\lambda(q-2)\lambda^{q-1},\quad 
    q:=\frac{4(p+1)}{p+3}\in(2, 3).  \]
From this expression, it follows that there exists \( r_0 > 0 \) such that \( C(r) > 0 \) for \( r \in (0, r_0) \) and \( C(r) < 0 \) for \( r \in (r_0, \infty) \). This behavior of \( C(r) \) is useful to show the positivity of \( J[u] \) in the case without point interaction (\( \alpha = \infty \)). However, for the case with point interaction (\( \alpha \in \mathbb{R} \)), this shape of \( C(r) \) does not suffice. Furthermore, a direct calculation shows that \( J[u](r) \to -\infty \) as \( r \downarrow 0 \), due to the singularity of \( u(r) \). Consequently, our argument for \( J[u] \) does not extend to the three dimensional case \eqref{eq:ODE3d}.
\end{remark}

Let $u(r)$ and $v(r)$ be two positive solution of \eqref{eq:ODE}. We put 
\[  w(r) 
    :=\frac{v(r)}{u(r)} \]
and 
\begin{align*}
    X(r) 
    :={}&w(r)^2J[u](r) 
    -J[v](r)
\end{align*}
for $r>0$. Note that this can be expressed as 
\begin{align}
    \notag 
    X&=\frac{a}{2}\Bigl(\frac{v^2(u')^2}{u^2}-(v')^2\Bigr)
    +b\Bigl(\frac{v^2u'}{u}-v'v\Bigr)
    +\frac{a}{p+1}v^2(u^{p-1}-v^{p-1})
\\ &\label{eq:xformula}
    =-\frac{a}{2}(u'v+uv')w'-buvw'
    +\frac{a}{p+1}u^{p-1}v^2(1-w^{p-1}).
\end{align}

\begin{lemma} \label{lem:Xto0} 
$\lim_{r\downarrow 0}X(r)=0$.
\end{lemma}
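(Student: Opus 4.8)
The plan is to start from the explicit representation \eqref{eq:xformula} of $X$, which exhibits it as the sum of the three terms
\[
    -\frac{a}{2}(u'v+uv')w',\qquad -buvw',\qquad \frac{a}{p+1}u^{p-1}v^2(1-w^{p-1}),
\]
and to show that each of them tends to $0$ as $r\downarrow 0$ by feeding in the boundary asymptotics already recorded. Concretely, I would note that as $r\downarrow 0$ one has $u(r),v(r)=O(|\log r|)$ by \eqref{eq:ODE2} and $u'(r),v'(r)=O(r^{-1})$ by \eqref{eq:n2.3}, while $w(r)=v(r)/u(r)\to y_v/y_u$ (a finite limit, read off from the leading logarithmic terms in \eqref{eq:ODE2}) and $w'(r)=o(1)$ by \eqref{eq:wo}. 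The weights are explicit here, namely $a(r)=r^q$ and $b(r)=(1-q/2)r^{q-1}$ with $q=\tfrac{2(p+1)}{p+3}\in(1,2)$.

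For the first two terms I would use $u'v+uv'=O(r^{-1}|\log r|)$ and $uv=O((\log r)^2)$, so that
\[
    a(u'v+uv')=O\bigl(r^{q-1}|\log r|\bigr),\qquad buv=O\bigl(r^{q-1}(\log r)^2\bigr).
\]
Since $q>1$ the exponent $q-1$ is positive, so each of these factors already tends to $0$; multiplying by the bounded (indeed $o(1)$) factor $w'$ then forces both terms to vanish. For the third term I would estimate $u^{p-1}v^2=O\bigl(|\log r|^{p+1}\bigr)$, whence $a\,u^{p-1}v^2=O\bigl(r^q|\log r|^{p+1}\bigr)\to 0$ because $q>0$, while the remaining factor $1-w^{p-1}$ converges to the finite constant $1-(y_v/y_u)^{p-1}$; the product therefore tends to $0$ as well. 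Summing the three estimates yields $\lim_{r\downarrow 0}X(r)=0$.

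The step I expect to be the main obstacle is controlling the first term $-\tfrac{a}{2}(u'v+uv')w'$, since there both $u'v$ and $uv'$ blow up like $r^{-1}|\log r|$, the strongest singularity appearing anywhere in $X$. The key point is that the weight $a(r)=r^q$ with $q>1$ overcompensates for this $r^{-1}$ growth, leaving a genuinely vanishing factor of order $r^{q-1}|\log r|$; this is precisely the place where the exponent $q=\tfrac{2(p+1)}{p+3}>1$ (valid because $p>1$) and the logarithmic boundary singularity of the point interaction cooperate. It is exactly this cooperation that fails in the three-dimensional analogue of Remark~\ref{rem:3d}, where the stronger singularity of $u$ drives $J[u]$, and hence terms of this type, to $-\infty$.
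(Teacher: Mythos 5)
Your proof is correct and follows exactly the paper's (very terse) argument: the paper likewise deduces the lemma from the expression \eqref{eq:xformula} together with the asymptotics \eqref{eq:ODE2}, \eqref{eq:n2.3}, \eqref{eq:wo} and the weights $a(r)=O(r^q)$, $b(r)=O(r^{q-1})$ with $q>1$. You have simply written out term-by-term the estimates the paper leaves implicit, including the key point that $q>1$ absorbs the $r^{-1}|\log r|$ singularity in the $a(u'v+uv')w'$ term.
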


\begin{proof}
By the expression~\eqref{eq:xformula} and by the estimates \eqref{eq:ODE2}, \eqref{eq:n2.3}, \eqref{eq:wo}, and $a(r)=O(r^q)$, $b(r)=O(r^{q-1})$ with $q>1$, we have the assertion.
\end{proof}

By the Poho\v{z}aev identity we have 
\begin{equation*} \label{eq:X'}
    X'(r) 
    =2w'(r)w(r)J[u](r),\quad 
    r>0.    
\end{equation*}

\begin{lemma} \label{lem:w'X'nega}
If $y_v>y_u$, then $w'(r)<0$ for all $r>0$. In particular, $X'(r)<0$ for all $r>0$.
\end{lemma}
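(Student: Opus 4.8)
The plan is to argue by contradiction, bootstrapping from the sign of $w'$ near the origin and closing the loop with the Pohozaev quantity $X$. First I would record the boundary behaviour: by \eqref{eq:ODE2} the ratio satisfies $w(r)=v(r)/u(r)\to y_v/y_u>1$ as $r\downarrow0$, so $w>1$ on some interval $(0,\delta)$. Setting $\Phi(r):=r\,u(r)^2w'(r)$, the representation \eqref{eq:expw} reads $\Phi(r)=-\int_0^r s\bigl(w(s)^{p-1}-1\bigr)u(s)^pv(s)\,ds$, whose integrand is strictly positive for small $s$; hence $\Phi<0$ and therefore $w'<0$ on $(0,\delta)$. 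Differentiating this integral (and using $v=wu$) also gives the clean formula $\Phi'(r)=-r\,u(r)^{p+1}\bigl(w(r)^p-w(r)\bigr)$, whose sign is governed by whether $w(r)$ exceeds $1$.

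Next I would suppose the conclusion fails and put $r_0:=\inf\{r>0:w'(r)\ge0\}$, which lies in $(0,\infty)$ by the previous step. Continuity forces $w'<0$ on $(0,r_0)$ and $w'(r_0)=0$, so in particular $\Phi(r_0)=0$. On $(0,r_0)$ we have $w>0$, $J[u]>0$ by \eqref{eq:Jposi}, and $w'<0$, hence $X'=2w'wJ[u]<0$ throughout; combined with $X(0^+)=0$ from Lemma~\ref{lem:Xto0}, this yields $X(r_0)<0$.

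The contradiction comes from evaluating $w(r_0)$ in two incompatible ways. Using the algebraic form \eqref{eq:xformula} of $X$ at $r_0$, where $w'(r_0)=0$ annihilates the first two terms, gives $X(r_0)=\tfrac{a(r_0)}{p+1}u(r_0)^{p-1}v(r_0)^2\bigl(1-w(r_0)^{p-1}\bigr)$; since $a,u,v>0$ and $X(r_0)<0$, this forces $w(r_0)>1$. On the other hand, $\Phi(r)<\Phi(r_0)=0$ for $r<r_0$ forces $\Phi'(r_0)\ge0$, and the formula $\Phi'(r_0)=-r_0u(r_0)^{p+1}\bigl(w(r_0)^p-w(r_0)\bigr)$ then gives $w(r_0)\le1$. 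These conclusions are incompatible, so no such $r_0$ exists and $w'<0$ on all of $(0,\infty)$. The final claim $X'<0$ is then immediate from $X'=2w'wJ[u]$ together with $w>0$ and $J[u]>0$.

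I expect the main obstacle to be precisely the possibility that $w$ descends below $1$ \emph{before} $w'$ first vanishes: a first-order analysis of $\Phi$ by itself is fully consistent with that scenario (there $\Phi'(r_0)\ge0$ is compatible with $w(r_0)\le1$) and yields no contradiction. The decisive leverage is the Pohozaev function $X$ and, crucially, the positivity \eqref{eq:Jposi} of $J[u]$, which fixes the sign of $X(r_0)$ and thereby forces $w(r_0)>1$, in direct conflict with the constraint $w(r_0)\le1$ extracted from $\Phi'(r_0)\ge0$.
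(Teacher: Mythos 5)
Your proof is correct, and it is worth noting that it cannot be ``the same as the paper's'' because the paper offers no proof of this lemma at all: it is disposed of by the citation to Shioji--Watanabe \cite[Proposition~3]{ShioWata16}. What you wrote is a complete, self-contained substitute assembled entirely from facts established earlier in the paper, and every step checks out. From \eqref{eq:ODE2}, $w(r)\to y_v/y_u>1$ as $r\downarrow 0$ (the logarithms dominate), so by \eqref{eq:expw} your function $\Phi(r)=ru(r)^2w'(r)=-\int_0^r s\bigl(w(s)^{p-1}-1\bigr)u(s)^pv(s)\,ds$ is negative near the origin, whence $w'<0$ there; at a putative first zero $r_0$ of $w'$, the identity $X'=2w'wJ[u]$ together with the positivity \eqref{eq:Jposi} and Lemma~\ref{lem:Xto0} gives $X(r_0)<0$, while \eqref{eq:xformula} with $w'(r_0)=0$ collapses to $X(r_0)=\tfrac{a(r_0)}{p+1}u(r_0)^{p-1}v(r_0)^2\bigl(1-w(r_0)^{p-1}\bigr)$, forcing $w(r_0)>1$; on the other hand $\Phi<0=\Phi(r_0)$ on $(0,r_0)$ forces $\Phi'(r_0)=-r_0u(r_0)^{p+1}\bigl(w(r_0)^p-w(r_0)\bigr)\ge 0$, i.e.\ $w(r_0)\le 1$ --- I verified both sign computations (the formula for $\Phi'$ via $v=wu$, and the vanishing of the first two terms of \eqref{eq:xformula} at a critical point of $w$), and the contradiction is genuine. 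The concluding implication $X'<0$ then follows from $w>0$, $w'<0$, and \eqref{eq:Jposi}, exactly as you say. Your closing diagnosis is also on target: the positivity of $J[u]$ is the decisive input, and it is precisely what fails in three dimensions (Remark~\ref{rem:3d}). Finally, your version has a concrete advantage over the bare citation: the solutions here blow up logarithmically at $r=0$, so invoking a proposition proved for a different class of boundary behavior tacitly requires checking that its proof survives the singularity; your argument performs that check explicitly, using only Lemma~\ref{lem:w'}, Lemma~\ref{lem:Xto0}, and \eqref{eq:Jposi}, which the paper established precisely for this singular boundary condition.
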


\begin{proof}
See \cite[Proposition 3]{ShioWata16}.
\end{proof}


\begin{proposition} \label{prop:uniq}
The problem \eqref{eq:ODE} has at most one positive solution.
\end{proposition}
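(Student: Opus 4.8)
The plan is to compare two positive solutions $u$ and $v$ of \eqref{eq:ODE} by splitting into two cases according to whether their leading coefficients $y_u$ and $y_v$ coincide. If $y_u = y_v$, then Lemma~\ref{eq:GronUniq} immediately yields $u = v$, so the entire difficulty is to rule out the case $y_u \ne y_v$. By the symmetric roles of $u$ and $v$ I may assume $y_v > y_u$, and I would aim for a contradiction built on the Pohozaev function $X(r) = w(r)^2 J[u](r) - J[v](r)$ with $w = v/u$.

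The argument hinges on the monotonicity of $X$. By Lemma~\ref{lem:w'X'nega}, the hypothesis $y_v > y_u$ forces $w'(r) < 0$ for all $r > 0$. Combined with $w > 0$ and the positivity $J[u] > 0$ from \eqref{eq:Jposi}, the identity $X'(r) = 2w'(r)w(r)J[u](r)$ shows $X'(r) < 0$ throughout $(0, \infty)$, so $X$ is strictly decreasing. Since Lemma~\ref{lem:Xto0} gives $\lim_{r \downarrow 0} X(r) = 0$, I would conclude $X(r) < 0$ for every $r > 0$.

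To reach a contradiction I would then examine the other endpoint and show $\lim_{r \to \infty} X(r) = 0$ as well, because a strictly decreasing function cannot share the same limit at both ends of its interval. Both $J[u]$ and $J[v]$ tend to $0$ as $r \to \infty$ by \eqref{eq:Jto0} (itself a consequence of the exponential decay in Lemma~\ref{lem:uu'exp}), so the only quantity left to control is the factor $w^2$ multiplying $J[u]$. Here the sign information pays off again: from the boundary behaviour \eqref{eq:ODE2} one reads off $\lim_{r \downarrow 0} w(r) = y_v/y_u$, and since $w$ is strictly decreasing this yields the uniform bound $0 < w(r) < y_v/y_u$ on $(0, \infty)$. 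Hence $w^2 J[u] \to 0$, so $X(r) \to 0$ as $r \to \infty$, contradicting strict monotonicity of $X$ together with $X(0^+) = 0$.

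I expect the delicate point to be the behaviour at infinity: showing $X(r) \to 0$ requires knowing that $w$ does not blow up, which is exactly the payoff of the sign estimate $w' < 0$ secured in Lemma~\ref{lem:w'X'nega}. The behaviour at the origin is already packaged into Lemma~\ref{lem:Xto0}, so once the boundedness of $w$ is established the two endpoint limits close the argument and the proposition follows.
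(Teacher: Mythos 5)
Your proposal is correct and follows essentially the same route as the paper: case-splitting on $y_u=y_v$ versus $y_u\ne y_v$, handling the first case via Lemma~\ref{eq:GronUniq}, and deriving a contradiction in the second from the strict monotonicity $X'=2w'wJ[u]<0$ (Lemma~\ref{lem:w'X'nega} and \eqref{eq:Jposi}), the endpoint limit $X(0^+)=0$ (Lemma~\ref{lem:Xto0}), and $X(r)\to 0$ as $r\to\infty$ via the boundedness of $w$ together with \eqref{eq:Jto0}. Your explicit justification of the boundedness of $w$ (namely $0<w<y_v/y_u$ from $w'<0$ and the limit at the origin) is exactly the point the paper compresses into ``noting that $w$ is bounded by Lemma~\ref{lem:w'X'nega}.''
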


\begin{proof}
We assume that there is two positive solutions $u$ and $v$ of \eqref{eq:ODE}. Suppose that $y_u<y_v$. By Lemmas~\ref{lem:Xto0} and \ref{lem:w'X'nega}, we have $\limsup_{r\to\infty}X(r)\in[-\infty, 0)$. On the other hand, noting that $w$ is bounded by Lemma~\ref{lem:w'X'nega}, we obtain $\lim_{r\to\infty}X(r)=0$ from \eqref{eq:Jto0}. This is a contradiction. Thus, $y_u=y_v$. Then Lemma~\ref{eq:GronUniq} implies $u= v$. This completes the proof.
\end{proof}

\begin{proof}[Proof of the Theorem~\ref{thm:1}]
The assertion follows from Lemma~\ref{lem:2.1} and Proposition~\ref{prop:uniq}. 
\end{proof}

\section{Nondegeneracy in the radial function space}\label{sec:NDrad}

Let $u_\lambda\in\mathcal{G}_\lambda$ be the unique positive ground state for \eqref{eq:sp} and let $L_\lambda^+$ be defined in \eqref{eq:Lpm}. In this section we prove the following nondegeneracy in the radial function space, which is a special case of Theorem~\ref{thm:2}. 

\begin{theorem} \label{prop:nondrad}
For $p>1$, $\alpha\in\R$, and $\lambda>|e_\alpha|$, the unique positive ground state $u_\lambda\in \mathcal{G}_\lambda$ is a nondegenerate critical point of $S_{\lambda}|_{H_{\alpha, \rad}^1(\R^2; \R)}$. More precisely, if $w\in H_{\alpha, \rad}^1(\R^2; \R)$ satisfies $L_\lambda^+w=0$, then $w= 0$.
\end{theorem}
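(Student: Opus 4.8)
The plan is to translate the problem into an ODE in the radial variable and exploit the variational characterization of the ground state $u_\lambda$ together with the sign structure of $L_\lambda^+$. First I would record that any $w\in H_{\alpha,\rad}^1(\R^2;\R)$ in $\ker L_\lambda^+$ satisfies, away from the origin, the linear ODE
\[
-w''-\frac{1}{r}w'+\lambda w - p\,u_\lambda^{p-1}w = 0,\quad r>0,
\]
while the point-interaction boundary condition forces a logarithmic expansion $w(r)=y_w(-(2\pi)^{-1}\log r+\alpha)+o(1)$ as $r\downarrow 0$ analogous to \eqref{eq:ODE2}, and $w(r)\to 0$ as $r\to\infty$. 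Running the same elliptic regularity and integration argument as in Lemma~\ref{lem:2.1} and the lemmas that follow, I expect $w\in C^2(0,\infty)$ with the analogue of \eqref{eq:n2.3}, namely $w'(r)=-y_w(2\pi r)^{-1}+o(1)$; these asymptotics are the tool that will let boundary terms near the origin vanish.

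The core of the argument is to show that the quadratic form $\langle L_\lambda^+ \varphi,\varphi\rangle$ restricted to radial functions has Morse index exactly one, with $u_\lambda$ (more precisely its derivative structure) accounting for the single negative direction, and that any kernel element must be excluded. Following \cite{KabeTana99}, I would realize $u_\lambda$ as the ground state of a perturbed family of equations \eqref{eq:elldel} so that the variational characterization of $u_\lambda$ as a minimizer (on a suitable Nehari-type or constrained manifold) gives that $L_\lambda^+$ is nonnegative on the tangent space to that manifold and has at most a one-dimensional negative subspace overall. Since $L_\lambda^- u_\lambda=0$ with $u_\lambda>0$ shows $u_\lambda$ is the ground state of $L_\lambda^-$ and $L_\lambda^+ = L_\lambda^- - (p-1)u_\lambda^{p-1} < L_\lambda^-$ in the form sense, $L_\lambda^+$ has a strictly negative lowest eigenvalue, so index at least one; combined with the variational upper bound the radial Morse index is exactly one. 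A kernel element $w$ would then be a second radial eigenfunction at eigenvalue zero, and the key step is to derive a contradiction by an oscillation/comparison argument: the lowest radial eigenfunction of $L_\lambda^+$ is positive, so a zero-energy $w$ orthogonal to it must change sign, yet an ODE Wronskian argument comparing $w$ against $u_\lambda$ or against $r\,u_\lambda'$ pins down the allowed asymptotics and rules out a nontrivial sign-changing solution consistent with both the singular condition \eqref{eq:ODE2} at the origin and decay at infinity.

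The main obstacle I anticipate is controlling the behaviour at $r\downarrow 0$: unlike the regular case $\alpha=\infty$, the solutions carry a logarithmic singularity, so the natural test function $r\,u_\lambda'(r)$ (which in the translation-invariant Euclidean setting lies in the kernel) is \emph{not} admissible here and the Wronskian $r\bigl(w' u_\lambda - w\,u_\lambda'\bigr)$ must be evaluated using the precise expansions \eqref{eq:ODE2} and \eqref{eq:n2.3} to show that the boundary contribution at the origin vanishes rather than blows up. I would handle this exactly as in Lemma~\ref{lem:w'}, computing $\frac{d}{dr}\bigl(r(w'u_\lambda-wu_\lambda')\bigr)$ from the two ODEs, integrating from $\varepsilon$ to $r$, and checking that the $\varepsilon$-boundary term is $o(1)$ thanks to the matched $-(2\pi)^{-1}\log\varepsilon$ and $-y(2\pi\varepsilon)^{-1}$ asymptotics of the functions and their derivatives. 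Once the boundary terms are shown to be harmless, the identity forces $w/u_\lambda$ to be monotone or constant, and feeding in the decay at infinity together with the finite $L^2$-type control from $w\in H_{\alpha,\rad}^1(\R^2)$ yields $w\equiv 0$. A secondary technical point is verifying that the perturbed ground states of \eqref{eq:elldel} exist and converge to $u_\lambda$ as $\varepsilon\downarrow 0$, which relies on the modified positivity claim \eqref{eq:1.15} with $G_\lambda$ replaced by $G_{\lambda,\varepsilon}$; I would import this from the construction of $G_{\lambda,\varepsilon}$ announced around Lemma~\ref{lem:5.4}.
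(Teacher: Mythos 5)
Your first and last paragraphs correctly identify the paper's framework---the Kabeya--Tanaka perturbed family \eqref{eq:elldel}, the Nehari-type variational characterization, the construction of $G_{\lambda,\varepsilon}$ to get a nonnegative minimizer, and the need to control the logarithmic singularity at the origin---but the step that actually kills a kernel element is missing, and the substitute you propose fails. Knowing that the radial Morse index of $L_\lambda^+$ equals one does not exclude a kernel: nothing spectral prevents eigenvalues $\nu_0<0=\nu_1$ (indeed, in the case $\alpha=\infty$ the operator $L^+$ on the \emph{whole} space has index one and a nontrivial kernel). Your fallback is an oscillation/Wronskian argument comparing $w$ with $u_\lambda$ or with $r u_\lambda'$, and for the radial mode this is precisely the argument that does not close. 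Since $w$ is orthogonal to the positive ground state of $L_\lambda^+$, it must change sign; but then the Wronskian identity $\bigl(r(w'u_\lambda-wu_\lambda')\bigr)'=-(p-1)r\,u_\lambda^p\,w$ has no definite sign, so you cannot conclude that $w/u_\lambda$ is monotone---the monotonicity mechanism of Lemma~\ref{lem:w'} is available only for two \emph{positive} solutions, which is exactly what a sign-changing kernel element is not. Likewise, comparison with $u_\lambda'$ produces the identity \eqref{eq:5.3} with $\mu_j=0$, where the factor $1-\mu_j$ has the unfavorable sign: this is why the paper confines such ODE comparison arguments to the nonradial modes $\mu_j\ge1$ in Section~\ref{sec:NDwhole} and uses an entirely different mechanism for the radial mode.

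The paper's mechanism, absent from your sketch, is a \emph{strictness} argument coming from the perturbation itself. First, the identification of $u_\lambda$ with the perturbed minimizer $v_{\lambda,\varepsilon}$ is obtained not by convergence as $\varepsilon\downarrow0$ (which you propose, and which would not suffice) but by proving uniqueness of positive solutions of the perturbed ODE problem \eqref{eq:ODEdel} for each small fixed $\varepsilon$---one checks that the Poho\v{z}aev coefficient $C_\varepsilon(r)$ keeps a fixed sign---so that $v_{\lambda,\varepsilon}=u_\lambda$ \emph{exactly}. Then, for $w\in\ker L_\lambda^+$, one chooses $s$ so that $w+su_\lambda$ is tangent to the perturbed Nehari manifold; Lemma~\ref{lem:posiK'} gives $\langle L_{\lambda,\varepsilon}^+(w+su_\lambda), w+su_\lambda\rangle\ge0$, while the relation \eqref{eq:Ldeldif}, $L_{\lambda,\varepsilon}^+=L_\lambda^+-(p-1)\varepsilon\chi u_\lambda^{p-1}$, together with $L_\lambda^+w=0$ turns this into
\[
0\ \ge\ \varepsilon(p-1)\int_{\R^2}\chi\, u_\lambda^{p-1}(w+su_\lambda)^2\,dx
\ +\ s^2(p-1)\int_{\R^2}u_\lambda^{p+1}\,dx,
\]
forcing $s=0$ and $w=0$ on $\operatorname{supp}\chi$; uniqueness for the Cauchy problem of the linear ODE then propagates $w\equiv0$. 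It is the strict negativity of the perturbation on an open set, not the Morse index count, that rules out the kernel; without this computation (or a genuinely worked-out substitute for it) your proposal has a gap at its central step.
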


Let $\chi\in C^\infty_c(0, \infty)$ satisfy $0\le \chi\le 1$, $\chi=0$ on $(0, 1]\cup[3, \infty)$, and $\chi(2)=1$. For $\varepsilon>0$ we define 
\begin{align*}
    g_{\lambda, \varepsilon}(r)
   &:=\lambda +\varepsilon\chi(r)u_\lambda(r)^{p-1},&
    h_\varepsilon(r)
   &:=1+\varepsilon\chi(r)
\end{align*}
for $r>0$. Then $u_\lambda$ solves 
\begin{equation} \label{eq:elldel}
    \bigl(-\Delta_\alpha+g_{\lambda, \varepsilon}(|x|)\bigr)u 
    -h_\varepsilon(|x|)|u|^{p-1}u
    =0,\quad u\in H_{\alpha, \rad}^1(\R^2).
\end{equation}

We put 
\begin{align*}
    Q_{\lambda, \varepsilon}(v)
   &:=\langle (-\Delta_\alpha+g_{\lambda, \varepsilon}(|x|))v, v\rangle,&
    N_{\varepsilon}(v)
   &:=\int_{\R^2}h_\varepsilon(|x|)|v|^{p+1}\,dx.&
\end{align*}
Note that $Q_{\lambda, \varepsilon}(v)^{1/2}$ and $N_\varepsilon(v)^{1/(p+1)}$ are equivalent to the $H_\alpha^1(\R^2)$-norm and the $L^{p+1}(\R^2)$-norm, respectively. We define the action
\[  S_{\lambda, \varepsilon} (v)
    :=\frac12Q_{\lambda, \varepsilon}(v)
    -\frac{1}{p+1}N_{\varepsilon}(v)  \]
and the Nehari functional      
\[  K_{\lambda, \varepsilon}(v)
    :=\langle S_{\lambda, \varepsilon}'(v), v\rangle 
    =Q_{\lambda, \varepsilon}(v)
    -N_{\varepsilon}(v)  \]
for $v\in H_\alpha^1(\R^2)$.
We consider the minimization problem
\begin{equation} \label{eq:n4.3}
    \mu_{\lambda, \varepsilon}
    :=\inf\{S_{\lambda, \varepsilon} (v)\:\!|\; 
    v\in H_{\alpha, \rad}^1(\R^2)\setminus\{0\},\: K_{\lambda, \varepsilon}(v)=0\}.
\end{equation}
Note that $\mu_{\lambda, \varepsilon}$ can rewritten as 
\begin{align}\label{eq:n4.4}
    \mu_{\lambda, \varepsilon}
    =\inf\set{c_p Q_{\lambda, \varepsilon}(v)}{v\in H_{\alpha, \rad}^1(\R^2)\setminus\{0\},\: K_{\lambda, \varepsilon}(v)=0},\quad 
    c_p:=\frac{p-1}{2(p+1)}.
\end{align}

\begin{lemma} \label{lem:Knega}
If $c_p Q_{\lambda, \varepsilon}(v)\le \mu_{\lambda, \varepsilon}$ and $c_p N_{\varepsilon}(v)\ge \mu_{\lambda, \varepsilon}$, then $v$ is a minimizer for \eqref{eq:n4.4}. 
\end{lemma}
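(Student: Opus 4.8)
The plan is to run the standard Nehari fibering argument, exploiting the fact that along a dilation $t\mapsto tv$ the quadratic part $Q_{\lambda,\varepsilon}$ scales like $t^2$ while $N_\varepsilon$ scales like $t^{p+1}$, and the power comparison afforded by $p>1$ pins a nonzero $v$ to a unique point on the Nehari constraint. The two hypotheses are designed precisely so that this point is forced to be $v$ itself, at the minimal level $\mu_{\lambda,\varepsilon}$.

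First I would record two preliminaries. From the stated equivalence of $Q_{\lambda,\varepsilon}(v)^{1/2}$ and $N_\varepsilon(v)^{1/(p+1)}$ with the $H_\alpha^1(\R^2)$- and $L^{p+1}(\R^2)$-norms, together with the embedding $H_\alpha^1(\R^2)\hookrightarrow L^{p+1}(\R^2)$, any $v$ with $K_{\lambda,\varepsilon}(v)=0$ satisfies $Q_{\lambda,\varepsilon}(v)=N_\varepsilon(v)\lesssim Q_{\lambda,\varepsilon}(v)^{(p+1)/2}$, which yields a positive lower bound on $Q_{\lambda,\varepsilon}(v)$ over the Nehari constraint and hence $\mu_{\lambda,\varepsilon}>0$. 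Since the hypothesis gives $c_p N_\varepsilon(v)\ge\mu_{\lambda,\varepsilon}>0$, the candidate $v$ is nonzero and both $Q_{\lambda,\varepsilon}(v)$ and $N_\varepsilon(v)$ are strictly positive (recall $h_\varepsilon\ge 1$).

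Next, for this fixed nonzero $v$ I would analyse $K_{\lambda,\varepsilon}(tv)=t^2 Q_{\lambda,\varepsilon}(v)-t^{p+1}N_\varepsilon(v)$, which vanishes at the unique $t_v=\bigl(Q_{\lambda,\varepsilon}(v)/N_\varepsilon(v)\bigr)^{1/(p-1)}>0$. Then $t_v v$ lies on the constraint, so $c_p Q_{\lambda,\varepsilon}(t_v v)=c_p t_v^2 Q_{\lambda,\varepsilon}(v)\ge\mu_{\lambda,\varepsilon}$ by \eqref{eq:n4.4}. Meanwhile the two hypotheses combine to $c_p Q_{\lambda,\varepsilon}(v)\le\mu_{\lambda,\varepsilon}\le c_p N_\varepsilon(v)$, whence $Q_{\lambda,\varepsilon}(v)\le N_\varepsilon(v)$, i.e.\ $K_{\lambda,\varepsilon}(v)\le 0$.

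The conclusion comes from ruling out strict inequality. If $K_{\lambda,\varepsilon}(v)<0$ then $t_v<1$, giving $c_p Q_{\lambda,\varepsilon}(t_v v)=c_p t_v^2 Q_{\lambda,\varepsilon}(v)<c_p Q_{\lambda,\varepsilon}(v)\le\mu_{\lambda,\varepsilon}$, which contradicts $c_p Q_{\lambda,\varepsilon}(t_v v)\ge\mu_{\lambda,\varepsilon}$. Hence $K_{\lambda,\varepsilon}(v)=0$, so $Q_{\lambda,\varepsilon}(v)=N_\varepsilon(v)$, and then the sandwich $c_p Q_{\lambda,\varepsilon}(v)\le\mu_{\lambda,\varepsilon}\le c_p N_\varepsilon(v)=c_p Q_{\lambda,\varepsilon}(v)$ collapses to $c_p Q_{\lambda,\varepsilon}(v)=\mu_{\lambda,\varepsilon}$. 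Thus $v$ lies on the Nehari constraint and attains $\mu_{\lambda,\varepsilon}$, i.e.\ it is a minimizer for \eqref{eq:n4.4}. I expect the main point requiring care to be the positivity $\mu_{\lambda,\varepsilon}>0$, which is what makes $v\neq 0$ and legitimizes the scaling $t_v$; the remainder is the elementary monotonicity comparison of $t^2$ against $t^{p+1}$ for $p>1$.
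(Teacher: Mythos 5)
Your proof is correct and takes essentially the same approach as the paper: both establish $\mu_{\lambda,\varepsilon}>0$ via the bound $Q_{\lambda,\varepsilon}(v)=N_\varepsilon(v)\lesssim Q_{\lambda,\varepsilon}(v)^{(p+1)/2}$ on the Nehari constraint, deduce $v\ne 0$ and $K_{\lambda,\varepsilon}(v)\le 0$ from the hypotheses, rescale $v$ onto the constraint, and force equality from the minimality of $\mu_{\lambda,\varepsilon}$ combined with the scaling factor being at most $1$. The only cosmetic difference is that you compute the rescaling $t_v=\bigl(Q_{\lambda,\varepsilon}(v)/N_\varepsilon(v)\bigr)^{1/(p-1)}$ explicitly, whereas the paper invokes the shape of $\sigma\mapsto K_{\lambda,\varepsilon}(\sigma v)$ to obtain $\sigma\in(0,1]$ with $K_{\lambda,\varepsilon}(\sigma v)=0$.
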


\begin{proof}
Note that $\mu_{\lambda, \varepsilon}>0$ because for $v\in H_{\alpha, \rad}^1(\R^2)\setminus\{0\}$ with $K_{\lambda, \varepsilon}(v)=0$ we have the estimate 
\begin{align*}
    &Q_{\lambda, \varepsilon}(v)
    =N_{\varepsilon}(v)
    \lesssim Q_{\lambda, \varepsilon}(v)^{(p+1)/2},&
    \text{i.e., }
    1\lesssim Q_{\lambda, \varepsilon}(v)^{(p-1)/2}. 
\end{align*}
Thus, the assumption implies $K_{\lambda, \varepsilon}(v)\le 0$ and $v\ne 0$. From the shape of the function $\sigma\mapsto K_{\lambda, \varepsilon}(\sigma v)$, there exists $\sigma\in(0, 1]$ such that $K_{\lambda, \varepsilon}(\sigma v)=0$. Thus, from \eqref{eq:n4.4} we obtain
\[  c_pQ_{\lambda, \varepsilon}(v)
    \le \mu_{\lambda, \varepsilon}
    \le c_pQ_{\lambda, \varepsilon}(\sigma v)
    =\sigma^2c_pQ_{\lambda, \varepsilon}(v)
    \le c_pQ_{\lambda, \varepsilon}(v).    \]
This implies $c_pQ_{\lambda, \varepsilon}(v)=\mu_{\lambda, \varepsilon}$, $\sigma=1$, and $K_{\lambda, \varepsilon}(v)=0$. 
\end{proof}

\begin{lemma} \label{lem:minim}
There exists a minimizer $v_\infty\in H_{\alpha, \rad}^1(\R^2)$ for \eqref{eq:n4.4}.
\end{lemma}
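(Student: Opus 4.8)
The goal is to produce a nonzero minimizer for the constrained problem \eqref{eq:n4.4} in the radial space. Since this is a standard concentration/compactness-type existence argument adapted to the energy space $H_{\alpha,\rad}^1(\R^2)$, the plan is to take a minimizing sequence and exploit the compactness afforded by radial symmetry in two dimensions.

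\begin{proof}[Proof proposal]
The plan is to argue by the direct method together with the compactness of the radial embedding. First I would take a minimizing sequence $(v_n)_{n}\subset H_{\alpha, \rad}^1(\R^2)\setminus\{0\}$ for \eqref{eq:n4.4}, so that $K_{\lambda, \varepsilon}(v_n)=0$ and $c_p Q_{\lambda, \varepsilon}(v_n)\to\mu_{\lambda, \varepsilon}$. By \eqref{eq:n4.4} the sequence is bounded in $H_\alpha^1(\R^2)$, because $Q_{\lambda,\varepsilon}^{1/2}$ is equivalent to the $H_\alpha^1$-norm. Hence, up to a subsequence, $v_n\rightharpoonup v_\infty$ weakly in $H_\alpha^1(\R^2)$ for some $v_\infty\in H_{\alpha, \rad}^1(\R^2)$. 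The first key point is to upgrade weak convergence to strong convergence in $L^{p+1}$: decomposing $v_n=f_n+c_n G_\lambda$ with $f_n\in H^1_{\rad}(\R^2)$ and $c_n\in\C$ bounded, the radial Strauss-type compact embedding $H^1_{\rad}(\R^2)\hookrightarrow\hookrightarrow L^{p+1}(\R^2)$ applies to the regular part $f_n$, while the singular part $c_n G_\lambda$ converges strongly in $L^{p+1}$ along a subsequence since $G_\lambda\in L^{p+1}(\R^2)$ is fixed and $c_n\to c_\infty$. This yields $N_\varepsilon(v_n)\to N_\varepsilon(v_\infty)$.

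Next I would rule out $v_\infty=0$. Since $K_{\lambda,\varepsilon}(v_n)=0$, the estimate in the proof of Lemma~\ref{lem:Knega} gives the uniform lower bound $Q_{\lambda,\varepsilon}(v_n)=N_\varepsilon(v_n)\gtrsim 1$, so $N_\varepsilon(v_n)$ is bounded away from zero; combined with $N_\varepsilon(v_n)\to N_\varepsilon(v_\infty)$ this forces $N_\varepsilon(v_\infty)>0$, hence $v_\infty\neq 0$. To conclude that $v_\infty$ is a minimizer I would verify the two hypotheses of Lemma~\ref{lem:Knega} for $v_\infty$. By weak lower semicontinuity of the quadratic form, $c_p Q_{\lambda,\varepsilon}(v_\infty)\le\liminf_n c_p Q_{\lambda,\varepsilon}(v_n)=\mu_{\lambda,\varepsilon}$, which is the first hypothesis. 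For the second, strong $L^{p+1}$-convergence gives $c_p N_\varepsilon(v_\infty)=\lim_n c_p N_\varepsilon(v_n)$, and since $K_{\lambda,\varepsilon}(v_n)=0$ means $N_\varepsilon(v_n)=Q_{\lambda,\varepsilon}(v_n)$, we get $c_p N_\varepsilon(v_\infty)=\lim_n c_p Q_{\lambda,\varepsilon}(v_n)=\mu_{\lambda,\varepsilon}$, which is the second hypothesis. Lemma~\ref{lem:Knega} then shows $v_\infty$ is a minimizer for \eqref{eq:n4.4}.

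I expect the main obstacle to be the strong $L^{p+1}$-convergence of the minimizing sequence, specifically handling the singular Green-function component $c_n G_\lambda$ of elements of $H_{\alpha,\rad}^1(\R^2)$, which the classical radial compactness theorem does not directly cover. The decomposition $v_n = f_n + c_n G_\lambda$ is not canonically orthogonal in the relevant norm, so care is needed to extract convergence of both $f_n$ and the scalars $c_n$ simultaneously; the boundedness of $c_n$ follows from the form expression \eqref{eq:norm}, and the integrability $G_\lambda\in L^{p+1}(\R^2)$ follows from its logarithmic singularity at the origin (see \eqref{eq:Gexpdecay}) and exponential decay at infinity (see \eqref{eq:defGlambda}). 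Once these two facts are in place, the remaining steps are routine applications of weak lower semicontinuity and the characterization in Lemma~\ref{lem:Knega}.
\end{proof}
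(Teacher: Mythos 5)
Your proposal is correct and takes essentially the same route as the paper: minimizing sequence, weak convergence in $H_{\alpha,\rad}^1(\R^2)$, weak lower semicontinuity of $Q_{\lambda,\varepsilon}$, compact passage to the limit in $N_{\varepsilon}$, and then Lemma~\ref{lem:Knega}. Your unpacking of the compactness step via the decomposition $v_n=f_n+c_nG_\lambda$ (Strauss embedding for $f_n$, convergence of the scalars $c_n$) is precisely what the paper's brief citation of ``the radial compactness lemma'' encapsulates, and your separate non-vanishing argument is harmless but redundant, since Lemma~\ref{lem:Knega} already forces $v_\infty\ne 0$ from $c_pN_{\varepsilon}(v_\infty)\ge\mu_{\lambda,\varepsilon}>0$.
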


\begin{proof}
Let $(v_n)_{n=1}^\infty\subset H_{\alpha, \rad}^1(\R^2)$ be a minimizing sequence of $\mu_{\lambda, \varepsilon}$. Then since 
\begin{align*}
    c_pQ_{\lambda, \varepsilon}(v_n)
    =c_pN_{\varepsilon}(v_n)    
    \to\mu_{\lambda, \varepsilon}
\end{align*}
as $n\to\infty$, there exist a subsequence $(v_{n_j})_{j=1}^\infty$ and $v_\infty\in H_{\alpha, \rad}^1(\R^2)$ such that $v_{n_j}\rightharpoonup v_\infty$ weakly in $H_\alpha^1(\R^2)$. In particular, $c_pQ_{\lambda, \varepsilon}(v_\infty)\le \lim_{j\to\infty}c_pQ_{\lambda, \varepsilon}(v_{n_j})=\mu_{\lambda, \varepsilon}$. Moreover, by the radial compactness lemma, we obtain $N_{\varepsilon}(v_\infty)=\mu_{\lambda, \varepsilon}$. Therefore,
by Lemma~\ref{lem:Knega}, $v_\infty$ is the desired function.
\end{proof}

To prove the existence of a nonnegative minimizer for \eqref{eq:n4.4}, we need the following the fundamental solution of $-\Delta+g_{\lambda, \varepsilon}(|x|)$.

\begin{lemma} \label{lem:5.4}
The fundamental solution $G_{\lambda, \varepsilon}\in L_{\rad}^2(\R^2)\cap C^2(\R^2\setminus\{0\})$ of $-\Delta+g_{\lambda, \varepsilon}(|x|)$ exists, that is $(-\Delta+g_{\lambda, \varepsilon}(|x|))G_{\lambda, \varepsilon}=\delta_0$, which satisfies the following.
\begin{itemize}
\item $G_{\lambda, \varepsilon}(x)=G_\lambda(x)-k_{\lambda, \varepsilon} +o(1)$ as $x\to 0$ for some $k_{\lambda, \varepsilon}\in\R$.
\item $G_{\lambda, \varepsilon}$ is positive in $\R^2\setminus\{0\}$.
\item $G_\lambda-G_{\lambda, \varepsilon}\in H^2(\R^2)$. 
\end{itemize}
\end{lemma}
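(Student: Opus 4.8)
The plan is to seek $G_{\lambda, \varepsilon}$ in the form $G_{\lambda, \varepsilon} = G_\lambda - \phi$, thereby reducing the construction to a \emph{regular} inhomogeneous problem for $\phi$ whose source is supported away from the origin. Writing $V_\varepsilon := g_{\lambda, \varepsilon}(|\cdot|) - \lambda = \varepsilon \chi(|\cdot|) u_\lambda(|\cdot|)^{p-1} \ge 0$, a distributional computation using $(-\Delta + \lambda)G_\lambda = \delta_0$ shows that $(-\Delta + g_{\lambda, \varepsilon})(G_\lambda - \phi) = \delta_0$ is equivalent to
\begin{equation}\label{eq:phi-eq}
    \bigl(-\Delta + g_{\lambda, \varepsilon}(|x|)\bigr)\phi = V_\varepsilon G_\lambda .
\end{equation}
The gain is that $V_\varepsilon$ is bounded, nonnegative, and supported in the annulus $\{1 \le |x| \le 3\}$, so that the right-hand side $V_\varepsilon G_\lambda$ is a nonnegative function in $L^2_{\rad}(\R^2)$ with compact support bounded away from the origin.

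First I would solve \eqref{eq:phi-eq}. Since $V_\varepsilon \in L^\infty$, the operator $-\Delta + g_{\lambda, \varepsilon}(|\cdot|)$ is self-adjoint on $H^2(\R^2)$ as a bounded perturbation of $-\Delta + \lambda$, and $\langle (-\Delta + g_{\lambda, \varepsilon})\phi, \phi \rangle \ge \lambda \|\phi\|_{L^2}^2$ because $V_\varepsilon \ge 0$; hence its spectrum lies in $[\lambda, \infty)$ and it is boundedly invertible. Setting $\phi := (-\Delta + g_{\lambda, \varepsilon})^{-1}(V_\varepsilon G_\lambda) \in H^2(\R^2)$ gives a solution, which is radial by the rotational invariance of the equation and the radiality of the source. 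It is of class $C^2$ on all of $\R^2$: near the origin $V_\varepsilon$ vanishes, so $\phi$ solves $(-\Delta+\lambda)\phi=0$ there and is smooth, while away from the origin the source $V_\varepsilon G_\lambda$ is $C^2$ and interior (Schauder) elliptic regularity applies. By the embedding $H^2(\R^2) \hookrightarrow C_0(\R^2)$ (valid since $2 > 2/2$), $\phi$ is continuous, bounded, and vanishes at infinity; put $k_{\lambda, \varepsilon} := \phi(0) \in \R$.

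With $\phi$ in hand, $G_{\lambda, \varepsilon} := G_\lambda - \phi$ satisfies every listed property except positivity essentially immediately. The third bullet is $G_\lambda - G_{\lambda, \varepsilon} = \phi \in H^2(\R^2)$; the asymptotics in the first bullet follow from $G_{\lambda, \varepsilon}(x) = G_\lambda(x) - \phi(0) + o(1)$ as $x \to 0$ by continuity of $\phi$; membership in $L^2_{\rad}(\R^2) \cap C^2(\R^2 \setminus \{0\})$ is inherited from the corresponding properties of $G_\lambda$ and $\phi$; and the distributional identity $(-\Delta + g_{\lambda, \varepsilon})G_{\lambda, \varepsilon} = \delta_0$ is exactly \eqref{eq:phi-eq}. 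Subtracting the equations for $G_\lambda$ and $\phi$ further shows that $G_{\lambda, \varepsilon}$ is a classical solution of $(-\Delta + g_{\lambda, \varepsilon})G_{\lambda, \varepsilon} = 0$ on $\R^2 \setminus \{0\}$.

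The main obstacle is positivity (second bullet), which I would obtain from the maximum principle. Since $G_\lambda(x) \to +\infty$ as $x \to 0$ while $\phi$ stays bounded, $G_{\lambda, \varepsilon} \to +\infty$ at the origin; since both $G_\lambda$ and $\phi$ vanish at infinity, $G_{\lambda, \varepsilon} \to 0$ as $|x| \to \infty$. Suppose $G_{\lambda, \varepsilon}$ were negative somewhere. These two asymptotics localize the (negative) infimum to a compact annulus $\{\delta \le |x| \le R\}$, where it is attained at some interior point $x_0$. At such a minimum $-\Delta G_{\lambda, \varepsilon}(x_0) \le 0$, whereas the equation gives $-\Delta G_{\lambda, \varepsilon}(x_0) = -g_{\lambda, \varepsilon}(x_0) G_{\lambda, \varepsilon}(x_0) > 0$ because $g_{\lambda, \varepsilon} \ge \lambda > 0$ and $G_{\lambda, \varepsilon}(x_0) < 0$, a contradiction. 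Hence $G_{\lambda, \varepsilon} \ge 0$, and since it is not identically zero, the strong maximum principle upgrades this to $G_{\lambda, \varepsilon} > 0$ on the connected set $\R^2 \setminus \{0\}$. The delicate points are the correct bookkeeping of the origin and infinity behavior so that the negative infimum is genuinely attained at an interior point, and confirming that $G_{\lambda, \varepsilon}$ is a bona fide classical solution there so that the pointwise maximum principle applies.
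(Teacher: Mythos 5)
Your proof is correct, but it takes a genuinely different route from the paper's. The paper works entirely at the level of the radial ODE: it defines $\widetilde{G}_{\lambda,\varepsilon}$ as the solution of $-\widetilde{G}''-\widetilde{G}'/r+g_{\lambda,\varepsilon}(r)\widetilde{G}=0$ with Cauchy data matching $G_\lambda$ at $r=3$, so that uniqueness for the Cauchy problem forces $\widetilde{G}_{\lambda,\varepsilon}=G_\lambda$ on $[3,\infty)$; positivity comes from a monotonicity argument (a critical point with positive value would contradict the ODE, so $\widetilde{G}_{\lambda,\varepsilon}$ is decreasing); and near the origin, since $g_{\lambda,\varepsilon}=\lambda$ on $(0,1]$, the solution is written in the fundamental system $(F_\lambda, G_\lambda)$ with $F_\lambda(r)=I_0(\sqrt{\lambda}r)$ as $c_1G_\lambda+c_2F_\lambda$, after which one shows $c_1>0$ and normalizes $G_{\lambda,\varepsilon}:=\widetilde{G}_{\lambda,\varepsilon}/c_1$, $k_{\lambda,\varepsilon}:=-c_2/c_1$. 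You instead make the ansatz $G_{\lambda,\varepsilon}=G_\lambda-\phi$, solve the regular problem $(-\Delta+g_{\lambda,\varepsilon})\phi=V_\varepsilon G_\lambda$ by bounded invertibility of the self-adjoint operator $-\Delta+g_{\lambda,\varepsilon}$ on $H^2(\R^2)$ (spectrum in $[\lambda,\infty)$ since $V_\varepsilon\ge 0$), and obtain positivity from the maximum principle, using the blow-up at the origin and decay at infinity to trap a putative negative minimum at an interior point of a compact annulus. Your approach is more robust: it makes the third bullet ($G_\lambda-G_{\lambda,\varepsilon}=\phi\in H^2(\R^2)$) true by construction rather than a final verification, and it uses only that $V_\varepsilon$ is bounded, nonnegative, and supported away from the origin, not the explicit Bessel structure. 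The paper's ODE route is more explicit: it produces the decomposition $\widetilde{G}_{\lambda,\varepsilon}=c_1G_\lambda+c_2F_\lambda$ near the origin, which is what Remark~\ref{rem:expc1c2} later exploits (via variation of parameters) to show $k_{\lambda,\varepsilon}>0$, and it gives the monotonicity of $G_{\lambda,\varepsilon}$ for free. Both arguments are complete modulo standard facts (elliptic regularity and the strong maximum principle in your case; ODE uniqueness in the paper's).
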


\begin{proof}
Let $\widetilde{G}_{\lambda, \varepsilon}(r)$ be the solution of $-\widetilde{G}''-\widetilde{G}'/r+g_{\lambda, \varepsilon}(r)\widetilde{G}=0$ with the conditions $\widetilde{G}(3)=G_\lambda(3)$ and $\widetilde{G}'(3)=G'_\lambda(3)$. By $g_{\lambda, \varepsilon}=\lambda$ on $[3, \infty)$ and uniqueness of the solutions of the Cauchy problem, we have $\widetilde{G}_{\lambda, \varepsilon}=G_\lambda$ on $[3, \infty)$. Note that $\widetilde{G}_{\lambda, \varepsilon}$ is decreasing. Indeed, if not, there exists $r_0\in(0, 3)$ such that $\widetilde{G}_{\lambda, \varepsilon}(r_0)>0$, $\widetilde{G}_{\lambda, \varepsilon}'(r_0)=0$, and $\widetilde{G}_{\lambda, \varepsilon}''(r_0)\le 0$, which contradicts $-\widetilde{G}''(r_0)-\widetilde{G}'(r_0)/r_0+g_{\lambda, \varepsilon}(r_0)\widetilde{G}(r_0)=0$. In particular $\widetilde{G}_{\lambda, \varepsilon}$ is positive on $(0, \infty)$. Let $F_\lambda(r)$ be the solution of 
\begin{equation}\label{eq:HelODE}
    -F''-\frac{1}{r}F'+\lambda F=0,\quad r>0
\end{equation} 
with the initial conditions $F(0)=1$ and $F'(0)=0$. Note that $F_\lambda(r)=I_0(\sqrt{\lambda}r)$, where $I_0$ is the modified Bessel function of the first kind of order zero (see \cite[Chapter~10]{NIST:DLMF}). Since the pair $(F_\lambda, G_\lambda)$ consists a system of fundamental solutions of \eqref{eq:HelODE} and $g_{\lambda, \varepsilon}=\lambda$ on $(0, 1]$,
we see that 
\begin{equation}\label{eq:lincombGtil}
    \widetilde{G}_{\lambda, \varepsilon}=c_1 G_\lambda+c_2 F_\lambda\quad 
    \text{on $(0, 1]$ for some $c_1, c_2\in\R$}.    
\end{equation}
Since $\widetilde{G}_{\lambda, \varepsilon}$ is positive, we see that $c_1\ge0$. Moreover, since $\widetilde{G}_{\lambda, \varepsilon}$ is decreasing and since $F_{\lambda}$ is positive and increasing, we have $c_1>0$. Now we put 
\begin{align*}
    &G_{\lambda, \varepsilon}(x)
    :=\frac{1}{c_1}\widetilde{G}_{\lambda, \varepsilon}(|x|),
    \quad x\in\R^2,&
    &k_{\lambda, \varepsilon}
    :=-\frac{c_2}{c_1}.  
\end{align*}  
Then we can easily check all of the assertions.
\end{proof}

By using $G_{\lambda, \varepsilon}$ given in Lemma~\ref{lem:5.4}, we can obtain another expression of the operator $-\Delta_\alpha$ for $\alpha\in\R$ as 
\[  \left\{\begin{aligned}
   &D(-\Delta_\alpha)
    =\bset{f+\frac{f(0)}{\alpha+\beta(\lambda)+k_{\lambda, \varepsilon}}G_{\lambda, \varepsilon}}{f\in H^2(\R^2)},
\\ &\begin{multlined}
    (-\Delta_\alpha+g_{\lambda, \varepsilon}(|x|))v
    =(-\Delta+g_{\lambda, \varepsilon}(|x|))f
\\  \text{for }v=f+\frac{f(0)}{\alpha+\beta(\lambda)+k_{\lambda, \varepsilon}}G_{\lambda, \varepsilon}\in D(-\Delta_\alpha).
    \end{multlined}
    \end{aligned}\right. \]
The quadratic form in the operator domain is written for $v=f+f(0)(\alpha+\beta(\lambda)+k_{\lambda, \varepsilon})^{-1}G_{\lambda, \varepsilon}\in D(-\Delta_\alpha)$ as 
\[  Q_{\lambda, \varepsilon}(v) 
    =((-\Delta+g_{\lambda, \varepsilon})f, f)_{L^2}
    +\frac{|f(0)|^2}{\alpha+\beta(\lambda)+k_{\lambda, \varepsilon}}
    \]
Thus, in the form domain we can express the quadratic form for $v=f+cG_{\lambda, \varepsilon}\in H_\alpha^1(\R^2)$ as 
\begin{equation}\label{eq:quaddel}
    Q_{\lambda, \varepsilon}(v) 
    =\|\nabla f\|_{L^2}^2
    +\int_{\R^2}g_{\lambda, \varepsilon}(|x|)|f(x)|^2\,dx 
    +(\alpha+\beta(\lambda)+k_{\lambda, \varepsilon})|c|^2.
\end{equation}

\begin{remark} \label{rem:expc1c2}
Although it is not essential in our argument, we can show that $k_{\lambda, \varepsilon}$ in Lemma~\ref{lem:5.4} is positive. Indeed, $\widetilde{G}_{\lambda, \varepsilon}(r)$ appearing in the proof satisfies
\[  \widetilde{G}_{\lambda, \varepsilon}''
    +\frac{1}{r}\widetilde{G}_{\lambda, \varepsilon}'-\lambda \widetilde{G}_{\lambda, \varepsilon}
    =f,\quad r>0  \]
with $f(r):=\varepsilon\chi(r)u_\lambda(r)^{p-1}\widetilde{G}_{\lambda, \varepsilon}$. By using the method of variation of parameters, we have the explicit formula
\[  \widetilde{G}_{\lambda, \varepsilon}(r) 
    =\Bigl(1-2\pi\int_3^r s F_\lambda(s)f(s)\,ds \Bigr)G_\lambda(r) 
    +2\pi\int_3^r s G_\lambda(s)f(s)\,ds \, F_\lambda(r). \]
Comparing this and \eqref{eq:lincombGtil}, we see that 
\begin{align*}
    c_1&=1+2\pi\int_1^3 s F_\lambda(s)f(s)\,ds>0,
\\  c_2&=-2\pi\int_1^3 s G_\lambda(s)f(s)\,ds<0.
\end{align*}
Thus, $k_{\lambda, \varepsilon}=-c_2/c_1>0$. 
\end{remark}

\begin{lemma} \label{lem:5.7}
There exists a nonnegative minimizer $v_{\lambda, \varepsilon}\in H_{\alpha, \rad}^1(\R^2)$ for \eqref{eq:n4.3}.
\end{lemma}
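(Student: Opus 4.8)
The plan is to start from the minimizer $v_\infty \in H_{\alpha, \rad}^1(\R^2)$ produced by Lemma~\ref{lem:minim} and to symmetrize it into a nonnegative function that is still a minimizer. The natural tool is the implication \eqref{eq:1.15}, but with $G_\lambda$ replaced by the perturbed fundamental solution $G_{\lambda, \varepsilon}$ of Lemma~\ref{lem:5.4}. Concretely, I would use the alternative decomposition $H_\alpha^1(\R^2) = \{f + cG_{\lambda, \varepsilon} : f \in H^1(\R^2),\ c \in \C\}$, which is valid because $G_\lambda - G_{\lambda, \varepsilon} \in H^2(\R^2)$ by Lemma~\ref{lem:5.4} and is a genuine (unique) decomposition since $G_{\lambda, \varepsilon} \notin H^1(\R^2)$. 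Writing $v_\infty = f + cG_{\lambda, \varepsilon}$, I set
\[
v_{\lambda, \varepsilon} := |f| + |c|\,G_{\lambda, \varepsilon}.
\]
Since $G_{\lambda, \varepsilon}$ is radial, $f = v_\infty - cG_{\lambda, \varepsilon}$ is radial as well, so $v_{\lambda, \varepsilon} \in H_{\alpha, \rad}^1(\R^2)$; moreover $v_{\lambda, \varepsilon} \ge 0$ because $G_{\lambda, \varepsilon} > 0$ on $\R^2 \setminus \{0\}$.

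Next I would verify the two inequalities needed to invoke Lemma~\ref{lem:Knega}. For the quadratic form I would use the explicit expression \eqref{eq:quaddel}: since $\| \nabla |f| \|_{L^2} \le \| \nabla f \|_{L^2}$, while the potential term $\int_{\R^2} g_{\lambda, \varepsilon}(|x|)|f|^2\,dx$ and the coefficient term $(\alpha + \beta(\lambda) + k_{\lambda, \varepsilon})|c|^2$ are unchanged under $f \mapsto |f|$ and $c \mapsto |c|$, one obtains $Q_{\lambda, \varepsilon}(v_{\lambda, \varepsilon}) \le Q_{\lambda, \varepsilon}(v_\infty)$, hence $c_p Q_{\lambda, \varepsilon}(v_{\lambda, \varepsilon}) \le \mu_{\lambda, \varepsilon}$. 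For the nonlinear term I would observe the pointwise bound $v_{\lambda, \varepsilon} = |f| + |c|G_{\lambda, \varepsilon} \ge |f + cG_{\lambda, \varepsilon}| = |v_\infty|$, which together with $h_\varepsilon \ge 1 > 0$ gives $N_\varepsilon(v_{\lambda, \varepsilon}) \ge N_\varepsilon(v_\infty)$, and therefore $c_p N_\varepsilon(v_{\lambda, \varepsilon}) \ge c_p N_\varepsilon(v_\infty) = \mu_{\lambda, \varepsilon}$, using $K_{\lambda, \varepsilon}(v_\infty) = 0$.

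With these two inequalities in hand, Lemma~\ref{lem:Knega} immediately yields that $v_{\lambda, \varepsilon}$ is a minimizer for \eqref{eq:n4.4}, equivalently for \eqref{eq:n4.3}; in particular $v_{\lambda, \varepsilon} \ne 0$ since $N_\varepsilon(v_{\lambda, \varepsilon}) \ge \mu_{\lambda, \varepsilon}/c_p > 0$. This completes the construction. The crux of the argument, and the reason the naive symmetrization with $G_\lambda$ fails for the perturbed problem as flagged after \eqref{eq:1.15}, is that only with the fundamental solution $G_{\lambda, \varepsilon}$ of $-\Delta + g_{\lambda, \varepsilon}(|x|)$ does the quadratic form take the clean diagonal shape \eqref{eq:quaddel}, in which the perturbed potential is absorbed into the singular profile so that the passage $f \mapsto |f|$ decreases only the gradient term. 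The positivity of $G_{\lambda, \varepsilon}$ (Lemma~\ref{lem:5.4}) is equally essential, as it is what guarantees both the pointwise domination $v_{\lambda, \varepsilon} \ge |v_\infty|$ and the nonnegativity of $v_{\lambda, \varepsilon}$.
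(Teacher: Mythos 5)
Your proposal is correct and follows essentially the same route as the paper: decompose the minimizer $v_\infty=f_\infty+c_\infty G_{\lambda,\varepsilon}$ with respect to the perturbed fundamental solution, set $v_{\lambda,\varepsilon}:=|f_\infty|+|c_\infty|G_{\lambda,\varepsilon}$, deduce $c_pQ_{\lambda,\varepsilon}(v_{\lambda,\varepsilon})\le\mu_{\lambda,\varepsilon}$ from \eqref{eq:quaddel} and $c_pN_\varepsilon(v_{\lambda,\varepsilon})\ge\mu_{\lambda,\varepsilon}$ from the positivity of $G_{\lambda,\varepsilon}$, and conclude via Lemma~\ref{lem:Knega}. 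Your additional remarks (uniqueness/radiality of the decomposition and the nontriviality of $v_{\lambda,\varepsilon}$) are correct details that the paper leaves implicit.
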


\begin{proof}
Let $v_\infty$ be a minimizer for \eqref{eq:n4.3} obtained in Lemma \ref{lem:minim} and be decomposed as $v_\infty=f_\infty+c_\infty G_{\lambda, \varepsilon}$ with $f_\infty\in H^1(\R^2)$ and $c_\infty\in\R$. Put $v_{\lambda, \varepsilon}:=|f_\infty|+|c_\infty|G_{\lambda, \varepsilon}\in H_\alpha^1(\R^2)$. Then by the expression \eqref{eq:quaddel} we have $c_p Q_{\lambda, \varepsilon}(v_{\lambda, \varepsilon})\le c_p Q_{\lambda, \varepsilon}(v_\infty)=\mu_{\lambda, \varepsilon}$. From the positivity of $G_{\lambda, \varepsilon}$, we also have $N_{\lambda, \varepsilon}(v_{\lambda, \varepsilon})\ge N_{\lambda, \varepsilon}(v_\infty)=\mu_{\lambda, \varepsilon}$. Thus, Lemma \ref{lem:Knega} implies that $v_{\lambda, \varepsilon}$ is a minimizer for \eqref{eq:n4.3}. 
\end{proof}

We define the linearized operator $L_{\lambda, \varepsilon}^+$ around $v_{\lambda, \varepsilon}$ as 
\[  L_{\lambda, \varepsilon}^+w
    :=(-\Delta_\alpha+g_{\varepsilon, \lambda}(|x|))w-ph_{\lambda, \varepsilon}(|x|)v_{\lambda, \varepsilon}(x)^{p-1}w \]
for $w\in H_\alpha^1(\R^2; \R)$.

\begin{lemma} \label{lem:posiK'}
If $w\in H_{\alpha, \rad}^1(\R^2;\R)$ satisfies $\langle K_{\lambda, \varepsilon}'(v_{\lambda, \varepsilon}), w\rangle =0$, then $\langle L_{\lambda, \varepsilon}^+w, w\rangle \ge 0$.
\end{lemma}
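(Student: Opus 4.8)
The plan is to realize $L_{\lambda, \varepsilon}^+$ as the second variation of $S_{\lambda, \varepsilon}$ at the constrained minimizer $v_{\lambda, \varepsilon}$, and to read off the desired inequality from the second-order minimality condition along a curve through $v_{\lambda, \varepsilon}$ that is tangent to $w$ and stays on the Nehari manifold $\{K_{\lambda, \varepsilon}=0\}$.

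First I would record that $v_{\lambda, \varepsilon}$ is in fact a free critical point of $S_{\lambda, \varepsilon}$ on $H_{\alpha, \rad}^1(\R^2)$, i.e. it solves \eqref{eq:elldel}. This is the standard Nehari argument: a Lagrange multiplier $\nu$ gives $S_{\lambda, \varepsilon}'(v_{\lambda, \varepsilon})=\nu K_{\lambda, \varepsilon}'(v_{\lambda, \varepsilon})$, and pairing with $v_{\lambda, \varepsilon}$ together with $\langle K_{\lambda, \varepsilon}'(v_{\lambda, \varepsilon}), v_{\lambda, \varepsilon}\rangle=(1-p)Q_{\lambda, \varepsilon}(v_{\lambda, \varepsilon})\neq 0$ forces $\nu=0$, whence $(-\Delta_\alpha+g_{\lambda, \varepsilon})v_{\lambda, \varepsilon}=h_\varepsilon v_{\lambda, \varepsilon}^p$. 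A direct computation of the second derivative then gives $\langle S_{\lambda, \varepsilon}''(v_{\lambda, \varepsilon})\eta, \eta\rangle=Q_{\lambda, \varepsilon}(\eta)-p\int_{\R^2}h_\varepsilon v_{\lambda, \varepsilon}^{p-1}\eta^2\,dx=\langle L_{\lambda, \varepsilon}^+\eta, \eta\rangle$ for real $\eta$; since $p>1$ the map $s\mapsto|s|^{p+1}$ is of class $C^2$, so this identification is unproblematic even where $v_{\lambda, \varepsilon}$ is small, and $\int h_\varepsilon v_{\lambda, \varepsilon}^{p-1}\eta^2$ converges thanks to the logarithmic singularity of $v_{\lambda, \varepsilon}$ and $\eta$ at the origin and exponential decay at infinity.

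Next I would reinterpret the hypothesis. Using the equation for $v_{\lambda, \varepsilon}$, one has $\langle K_{\lambda, \varepsilon}'(v_{\lambda, \varepsilon}), w\rangle=2\langle(-\Delta_\alpha+g_{\lambda, \varepsilon})v_{\lambda, \varepsilon}, w\rangle-(p+1)\int h_\varepsilon v_{\lambda, \varepsilon}^p w\,dx=(1-p)\int h_\varepsilon v_{\lambda, \varepsilon}^p w\,dx$, so the assumption $\langle K_{\lambda, \varepsilon}'(v_{\lambda, \varepsilon}), w\rangle=0$ is equivalent to $\int_{\R^2}h_\varepsilon v_{\lambda, \varepsilon}^p w\,dx=0$. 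I would then project the radial line $\phi(t):=v_{\lambda, \varepsilon}+tw$ onto the Nehari manifold by setting $\gamma(t):=s(t)\phi(t)$ with $s(t):=(Q_{\lambda, \varepsilon}(\phi(t))/N_\varepsilon(\phi(t)))^{1/(p-1)}$, so that $K_{\lambda, \varepsilon}(\gamma(t))=0$ for small $t$ and $s(0)=1$; since $\phi(t)$ is radial, $\gamma(t)\in H_{\alpha, \rad}^1(\R^2)$ is an admissible competitor. Differentiating $s$ at $t=0$ and using the equation for $v_{\lambda, \varepsilon}$ yields $s'(0)=-(\int h_\varepsilon v_{\lambda, \varepsilon}^p w\,dx)/Q_{\lambda, \varepsilon}(v_{\lambda, \varepsilon})$, which vanishes exactly by the reformulated hypothesis, so $\gamma'(0)=w$.

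Finally, because $v_{\lambda, \varepsilon}=\gamma(0)$ minimizes $S_{\lambda, \varepsilon}$ on the Nehari manifold, the function $g(t):=S_{\lambda, \varepsilon}(\gamma(t))$ has a minimum at $t=0$, hence $g''(0)\geq 0$. Using $S_{\lambda, \varepsilon}'(v_{\lambda, \varepsilon})=0$ to annihilate the term involving $\gamma''(0)$, I obtain $g''(0)=\langle S_{\lambda, \varepsilon}''(v_{\lambda, \varepsilon})\gamma'(0), \gamma'(0)\rangle=\langle L_{\lambda, \varepsilon}^+ w, w\rangle$, which gives the claim. I expect the main point to guard against is precisely that the tangency hypothesis forces $s'(0)=0$: this is what makes the cross term $s'(0)v_{\lambda, \varepsilon}$ in $\gamma'(0)$ drop out, so that one genuinely recovers $\langle L_{\lambda, \varepsilon}^+ w, w\rangle$ rather than a quadratic form evaluated at $w+s'(0)v_{\lambda, \varepsilon}$. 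The remaining, purely technical, points — the $C^2$-regularity of $g$ near $t=0$ and the convergence of the relevant integrals — follow from $p>1$ and the decay/singularity structure noted above.
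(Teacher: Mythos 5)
Your proof is correct and follows essentially the same strategy as the paper: build a curve on the Nehari manifold through $v_{\lambda,\varepsilon}$ whose tangent at $0$ is exactly $w$ (the tangency being forced by the hypothesis $\langle K_{\lambda,\varepsilon}'(v_{\lambda,\varepsilon}), w\rangle=0$), and then read off $\langle L_{\lambda,\varepsilon}^+w,w\rangle\ge 0$ from second-order minimality together with $S_{\lambda,\varepsilon}'(v_{\lambda,\varepsilon})=0$ killing the curvature term. The only difference is cosmetic: you project onto the Nehari manifold by the explicit scaling $s(t)(v_{\lambda,\varepsilon}+tw)$ using homogeneity of $Q_{\lambda,\varepsilon}$ and $N_{\varepsilon}$, whereas the paper uses the implicit function theorem to produce the curve $v_{\lambda,\varepsilon}+sw+t(s)v_{\lambda,\varepsilon}$ with $t'(0)=0$.
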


\begin{proof}
Since $K_{\lambda, \varepsilon}(v_{\lambda, \varepsilon})=0$ and
\begin{align*}
    &\partial_tK_{\lambda, \varepsilon}(v_{\lambda, \varepsilon}+sw+tv_{\lambda, \varepsilon})|_{(s, t)=(0, 0)}
    =\langle K_{\lambda, \varepsilon}'(v_{\lambda, \varepsilon}), v_{\lambda, \varepsilon}\rangle
    = -(p-1)N_{\lambda, \varepsilon}(v_{\lambda, \varepsilon})<0,
\end{align*}
the implicit function theorem implies that there exists a function $t(s)$ defined around $s=0$ such that $t(0)=0$ and $K_{\lambda, \varepsilon}(v_{\lambda, \varepsilon}+sw+t(s)v_{\lambda, \varepsilon})=0$. By the characterization \eqref{eq:n4.3}, the function $s\mapsto S_{\lambda, \varepsilon}(v_{\lambda, \varepsilon}+sw+t(s)v_{\lambda, \varepsilon})$ has the minimum at $s=0$ and satisfies
\[  0\le \partial_s^2S_{\lambda, \varepsilon}(v_{\lambda, \varepsilon}+sw+t(s)v_{\lambda, \varepsilon})|_{s=0}
    =\langle L_{\lambda, \varepsilon}^+w, w\rangle, \]
where we used $S_{\lambda, \varepsilon}'(v_{\lambda, \varepsilon})=0$ and $t'(0)=-
\langle K_{\lambda, \varepsilon}'(v_{\lambda, \varepsilon}), w\rangle/\langle K_{\lambda, \varepsilon}'(v_{\lambda, \varepsilon}), v_{\lambda, \varepsilon}\rangle=0$. 
\end{proof}

\begin{proof}[Proof of Proposition~\ref{prop:nondrad}]
First, we show $u_\lambda=v_{\lambda, \varepsilon}$ for small $\varepsilon>0$. Since $v_{\lambda, \varepsilon}$ is a minimizer for \eqref{eq:n4.3} and $\langle K_{\lambda, \varepsilon}'(v_{\lambda, \varepsilon}), v_{\lambda, \varepsilon}\rangle \ne0$, the Lagrange multiplier theorem implies that $S_{\lambda, \varepsilon}'(v_{\lambda, \varepsilon})=lK_{\lambda, \varepsilon}'(v_{\lambda, \varepsilon})$ for some $l\in\R$. Then we see that $l=0$ because 
\[  0=\langle S_{\lambda, \varepsilon}'(v_{\lambda, \varepsilon}), v_{\lambda, \varepsilon}\rangle 
    =l\langle K_{\lambda, \varepsilon}'(v_{\lambda, \varepsilon}), v_{\lambda, \varepsilon}\rangle \]
and $\langle K_{\lambda, \varepsilon}'(v_{\lambda, \varepsilon}), v_{\lambda, \varepsilon}\rangle\ne0$. Therefore, $v_{\lambda, \varepsilon}$ is a solution of  \eqref{eq:elldel}, and the corresponding boundary value problem of the ordinary differential equation is 
\begin{equation} \label{eq:ODEdel}
    \begin{dcases}
    - v''
    -\frac{1}{r} v' 
    +g_{\lambda, \varepsilon}(r)v 
    -h_\varepsilon(r)v^p 
    =0,& 
    r>0,
\\  v(r)
    =y_v\Bigl(-\frac{1}{2\pi}\log r
    +\alpha\Bigr)+o(1)&
    \text{as }r\downarrow 0
    \text{ for some }y_v>0,
\\  v(r)\to 0&
    \text{as }r\to\infty.
    \end{dcases}
\end{equation} 
Since $v_{\lambda, \varepsilon}\ge 0$, the uniqueness of the Cauchy problem of second order ordinary differential equations implies $v_{\lambda, \varepsilon}>0$. 

In order to use the Poho\v{z}aev identity, let $a_\varepsilon$, $b_\varepsilon$, $c_\varepsilon$, and $C_\varepsilon$ be the functions defined in \eqref{eq:a(r)}--\eqref{eq:C(r)} with $d=2$, $g(r)=g_{\lambda, \varepsilon}$, and $h(r)=h_\varepsilon(r)$. Since $C(r)>0$ for all $r>0$, we can take $\varepsilon>0$ small so that $C_\varepsilon(r)>0$ for all $r>0$. Then by the same argument as in Sections~\ref{sec:2} and \ref{sec:3}, we see that \eqref{eq:ODEdel} has at most one positive solution. Since $u_\lambda$ and $v_{\lambda, \varepsilon}$ are positive solutions of \eqref{eq:ODEdel}, we obtain $u_\lambda=v_{\lambda, \varepsilon}$. 
    
To prove the assertion, let $w\in H_{\alpha, \rad}^1(\R^2;\R)$ and $L_{\lambda}^+ w =0$. If we take $\varepsilon>0$ small so that $u_\lambda=v_{\lambda, \varepsilon}$, then we can write 
\begin{align} \notag
    L_{\lambda, \varepsilon}^+
   &=-\Delta_\alpha+\lambda +\varepsilon\chi(r)u_\lambda(r)^{p-1}-p(1+\varepsilon\chi(r))v_{\lambda, \varepsilon}^{p-1}
\\ \label{eq:Ldeldif} 
   &=L_{\lambda}^+-(p-1)\varepsilon\chi u_\lambda^{p-1}.
\end{align}
Since $\langle K_{\lambda, \varepsilon}'(v_{\lambda, \varepsilon}), v_{\lambda, \varepsilon}\rangle \ne 0$, there exists $s\in\R$ such that $\langle K_{\lambda, \varepsilon}'(v_{\lambda, \varepsilon}), w+sv_{\lambda, \varepsilon}\rangle=0$. Lemma~\ref{lem:posiK'} implies $\langle L_{\lambda, \varepsilon}^+(w+sv_{\lambda, \varepsilon}), w+sv_{\lambda, \varepsilon}\rangle\ge 0$. Using \eqref{eq:Ldeldif}, we obtain 
\begin{align*}
    0 
   &\ge -\langle L_{\lambda, \varepsilon}^+w, w\rangle-2s\langle L_{\lambda, \varepsilon}^+w, v_{\lambda, \varepsilon}\rangle
    -s^2\langle L_{\lambda, \varepsilon}^+v_{\lambda, \varepsilon}, v_{\lambda, \varepsilon}\rangle
\\ &=\varepsilon(p-1)\int_{\R^2}\chi u_\lambda^{p-1}(w+su_\lambda)^2\,dx 
    +s^2(p-1)\int_{\R^2}u_\lambda^{p+1}\,dx.
\end{align*}
Since the each term in the last expression is nonnegative, we have $s=0$, and thus, $w=0$ on $\operatorname{supp}\chi$ due to $\varepsilon>0$. Therefore, the uniqueness of the Cauchy problem for the equation $L_\lambda^+w=0$, we obtain $w=0$. This means the assertion.
\end{proof}

\section{Nondegeneracy in the whole function space}\label{sec:NDwhole}

In this section, we complete the proof of Theorem~\ref{thm:2}. Let $(\mu_j)_{j=0}^\infty$ be the increasing sequence consisting of all eigenvalues of the Laplace--Beltrami operator $-\Delta_{S^1}$ on $L^2(S^1)$ and let $(e_j)_{j=0}^\infty$ be their corresponding normalized eigenfunctions. It is know that 
\begin{equation*}
    0=
    \mu_0
    <\mu_1=\mu_2 =1 
    <\mu_3\le \cdots,
\end{equation*}
and $(e_j)_{j=0}^\infty$ is a complete orthogonal basis of $L^2(S^1)$. 

Let $w\in H_\alpha^1(\R^2; \R)$ satisfy $L_\lambda^+w=0$. Note that $w$ is belongs to $D(-\Delta_\alpha)$ from the equation $L_\lambda^+w=0$ and can be decomposed as $w=f+f(0)(\alpha+\beta(\lambda))^{-1}G_\lambda$ with some $f\in H^2(\R^2)$. We put
\[  w_j(x) 
    :=\int_{S^1}w(|x|\omega)e_j(\omega)\,d\omega
    \]
for $j\in\{0\}\cup\mathbb{N}$. Obviously $w_j$ is radially symmetric, and we can extend $w$ as 
\[  w(x)=\sum_{j=0}^\infty w_j(|x|)e_j\Bigl(\frac{x}{|x|}\Bigr). \]

\begin{lemma} \label{lem:5.1}
For each $j\in\{0\}\cup\mathbb{N}$, $w_j$ satisfies the following.
\begin{itemize}
\item $w_j$ belongs to $D(-\Delta_\alpha)$.
\item $\lim_{x\to 0}w_j(x)=0$ if $j\ge 1$. 
\item $w_j$ is in $C^2(0, \infty)$ and solves the equation
\begin{equation}\label{eq:wj}
    -w_j''
    -\frac{1}{r}w_j' 
    +\Bigl(\frac{\mu_j}{r^2}
    +\lambda 
    -pu_\lambda^{p-1} 
    \Bigr)w_j
    =0,\quad 
    r>0.
\end{equation}
\end{itemize}
\end{lemma}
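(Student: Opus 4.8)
The plan is to reduce all three assertions to properties of the two pieces in the decomposition $w = f + \kappa G_\lambda$, where $f \in H^2(\R^2)$ and $\kappa := f(0)(\alpha+\beta(\lambda))^{-1}$; this decomposition is available because $L_\lambda^+ w = 0$ forces $w \in D(-\Delta_\alpha)$ exactly as in the proof of Lemma~\ref{lem:2.1}. Write $P_j g(r) := \int_{S^1} g(r\omega)e_j(\omega)\,d\omega$ for the radial coefficient, so that $w_j = P_j w$. The key elementary observation is that $G_\lambda$ is radial, whence $P_j G_\lambda(r) = G_\lambda(r)\int_{S^1} e_j\,d\omega$; since $e_0$ is the constant eigenfunction and the $e_j$ are orthonormal, $\int_{S^1} e_j\,d\omega = \sqrt{2\pi}\,\delta_{j0}$. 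Consequently the singular part of $w$ is seen only by the radial mode: $w_j = P_j f$ for $j \ge 1$, while $w_0 = P_0 f + \kappa\sqrt{2\pi}\,G_\lambda$.

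For the first bullet it then suffices to prove that the radial coefficient $P_j f$ lies in $H^2(\R^2)$. Granting this, $w_j = P_j f \in H^2 \subset D(-\Delta_\alpha)$ for $j \ge 1$, and $w_0 = P_0 f + \kappa\sqrt{2\pi}\,G_\lambda$ has the form $g + cG_\lambda$ with $g \in H^2$, so $w_0 \in D(-\Delta_\alpha)$ as well by \eqref{eq:defpi}. To obtain $P_j f \in H^2$, I would use that the orthogonal projection $\Pi_j \colon g \mapsto (P_j g)(|x|)\,e_j(x/|x|)$ onto the $j$-th angular mode has range that reduces $-\Delta$ (the subspace $\{h(r)e_j(\omega)\}$ is invariant under $-\Delta$, since $-\Delta(h e_j) = (-h'' - r^{-1}h' + \mu_j r^{-2}h)e_j$); hence $\Pi_j$ commutes with $-\Delta$ and preserves its domain $H^2(\R^2)$, giving $(P_j f)(|x|)e_j(x/|x|) \in H^2$. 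Passing from this full mode back to the bare radial coefficient $P_j f$ requires absorbing the centrifugal term $\mu_j r^{-2}P_j f$; for $j \ge 1$ (so $\mu_j \ge 1$) this is supplied by the two-dimensional Hardy inequality, which is valid precisely on non-radial modes, while for $j = 0$ the coefficient $P_0 f$ is the spherical average of $f$ and is immediately in $H^2$.

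The second bullet is the easiest. For $j \ge 1$ the $G_\lambda$ contribution has already dropped out, so $w_j(r) = \int_{S^1} f(r\omega)e_j(\omega)\,d\omega$, and since $f \in H^2(\R^2) \hookrightarrow C(\R^2)$ is continuous we may let $r \downarrow 0$ to obtain $w_j(r) \to f(0)\int_{S^1} e_j\,d\omega = 0$, using $\int_{S^1} e_j = 0$ for $j \ge 1$.

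For the third bullet I would first invoke elliptic regularity, as in Lemma~\ref{lem:2.1} (e.g. \cite[Theorem~11.7]{LiebLoss01}), to get $w \in C^2(\R^2\setminus\{0\})$; this also justifies differentiating under the integral sign, so each $w_j \in C^2(0,\infty)$. Away from the origin $L_\lambda^+ w = 0$ reads $-\Delta w + \lambda w - p u_\lambda^{p-1}w = 0$, and writing $\Delta = \partial_r^2 + r^{-1}\partial_r + r^{-2}\Delta_{S^1}$ I multiply by $e_j(\omega)$ and integrate over $S^1$. The only nonroutine term is the angular one: by self-adjointness of $-\Delta_{S^1}$ together with $\Delta_{S^1}e_j = -\mu_j e_j$, we get $\int_{S^1}(\Delta_{S^1}w)e_j\,d\omega = -\mu_j w_j$, while $u_\lambda^{p-1}$ is radial and so factors out of the angular integral. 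Collecting terms produces exactly \eqref{eq:wj}. The main obstacle in the whole lemma is the regularity statement $P_j f \in H^2$ near the origin, that is, controlling the centrifugal singularity $\mu_j r^{-2}P_j f$ via the Hardy inequality on the $j \ge 1$ modes; everything else is bookkeeping on the splitting $w = f + \kappa G_\lambda$ together with the angular projection of the equation.
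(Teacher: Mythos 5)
Your handling of the second and third bullets is exactly the paper's argument: the same decomposition $w=f+\kappa G_\lambda$, the same observation that $\int_{S^1}e_j\,d\omega=0$ for $j\ge1$ (so the singular part survives only in the radial mode), continuity of $f$ via $H^2(\R^2)\hookrightarrow C(\R^2)$ for the limit at the origin, and projection of the equation written in polar coordinates away from the origin. The genuine gap is in your argument for the first bullet, namely the step ``passing from the full mode $(P_jf)(r)e_j(\omega)\in H^2$ back to the bare radial coefficient via the two-dimensional Hardy inequality.'' Hardy controls $r^{-1}P_jf$ in $L^2$, but to drop the centrifugal term from $\Delta\bigl((P_jf)e_j\bigr)\in L^2$ and conclude $\Delta\bigl((P_jf)(|x|)\bigr)\in L^2$ you need $r^{-2}P_jf\in L^2$, i.e.\ a second-order (Rellich-type) inequality, and in two dimensions no such inequality holds on the modes with $\mu_j=1$. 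Indeed your intermediate claim ``$P_jf\in H^2(\R^2)$ for every $f\in H^2(\R^2)$, $j\ge1$'' is false there: take $f(x)=x_1\eta(|x|)$ with $\eta$ a smooth compactly supported cutoff equal to $1$ near $0$, so $f\in C_c^\infty(\R^2)\subset H^2(\R^2)$; for $e_j(\theta)=\pi^{-1/2}\cos\theta$ one gets $P_jf(r)=\sqrt{\pi}\,r\eta(r)$, and the radial function $|x|\eta(|x|)$ is \emph{not} in $H^2(\R^2)$, since its Laplacian behaves like $|x|^{-1}$ near the origin and $\int_0^1 r^{-2}\,r\,dr=\infty$. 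The same example ($h=r\eta$, $he_j\propto x_1\eta$ smooth, yet $r^{-2}h=\eta/r\notin L^2(r\,dr)$) shows that no estimate of the form $\|r^{-2}h\|_{L^2}\lesssim\|\Delta(he_j)\|_{L^2}+\|he_j\|_{L^2}$ can rescue the step when $\mu_j=1$. A secondary slip: the inclusion $H^2(\R^2)\subset D(-\Delta_\alpha)$ you invoke is also false — by \eqref{eq:defpi} a bounded element of $D(-\Delta_\alpha)$ must vanish at the origin — so even granting $P_jf\in H^2$ you would additionally need $w_j(0)=0$ (your second bullet) to conclude membership.

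For comparison, the paper does not attempt any of this: it reads $w_j\in D(-\Delta_\alpha)$ directly off the displayed decomposition of $w_j$, which is genuinely immediate only for $j=0$, where $P_0f$ coincides (up to the constant $\sqrt{2\pi}$) with the angular projection $\Pi_0 f$ and $\Pi_0$ commutes with $-\Delta$, hence preserves $H^2$. That radial case is in fact the only one used later: Case~1 of the proof of Theorem~\ref{thm:2} needs $w_0\in H_{\alpha,\rad}^1(\R^2;\R)$ to apply Theorem~\ref{prop:nondrad}, while Cases~2 and~3 (for $j\ge1$) use only the vanishing of $w_j$ at the origin, the ODE \eqref{eq:wj}, and decay at infinity, which follow from your bullets~2 and~3 together with ODE arguments. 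If you want the first bullet for $j\ge3$, your strategy can be repaired by replacing Hardy with the Rellich inequality on modes with $\mu_j\ge4$ (where the constant $(\mu_j-1)^2$ is positive), or by the ODE observation that bounded solutions of \eqref{eq:wj} vanish like $r^{\sqrt{\mu_j}}$ at the origin. For $j=1,2$ no functional inequality exists, and the membership $w_j\in D(-\Delta_\alpha)$ is only available a posteriori (Case~2 shows $w_1=w_2\equiv0$); so either derive it from the ODE analysis or restrict the first bullet to the cases where it is actually needed.
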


\begin{proof}
By the decomposition $w=f+f(0)(\alpha+\beta(\lambda))^{-1}G_\lambda$ with $f\in H^2(\R^2)$ and 
\[  w_j(x) 
    =\int_{S^1}f(|x|\omega)e_j(\omega)\,d\omega
    +\frac{f(0)}{\alpha+\beta(\lambda)}\Bigl(\int_{S^1}e_j(\omega)\,d\omega\Bigr) G_\lambda(x),
    \]
we see that $w_j\in D(-\Delta_\alpha)$. Since $e_0$ is a nonzero constant and $(e_0, e_j)_{L^2(S^1)}=\delta_{0j}$, where $\delta_{0j}$ is the Kronecker delta, we have $\int_{S^1}e_j(\omega)\,d\omega=0$ for $j\ge 1$. This means that if $j\ge 1$, then
\[  w_j(x) 
    =\int_{S^1}f(|x|\omega)e_j(\omega)\,d\omega \]
and $\lim_{x\to 0}w_j(x)=0$.  The equation $L_\lambda^+ w=0$ away from the origin can be rewritten as 
\begin{align*}
  0&=(-\Delta_\alpha+\lambda)w 
    -pu_\lambda^{p-1}w
\\ &=(-\Delta+\lambda)w 
    -pu_\lambda^{p-1}w
\\ &=\Bigl(-\partial_r^2
    -\frac{1}{r}\partial_r 
    -\frac{1}{r^2}\Delta_{S^1}
    +\lambda\Bigr)w
    -pu_\lambda^{p-1}w,\quad 
    x\in\R^2\setminus\{0\}.
\end{align*}
Thus, we see that $w_j$ satisfies \eqref{eq:wj} in weak sense. Moreover, we see from this equation that $w_j$ belongs to $C^2(0, \infty)$ and satisfies \eqref{eq:wj} in the classical sense. This completes the proof.
\end{proof}

\begin{lemma}
For each $j\in\{0\}\cup\mathbb{N}$, $w_j$ satisfies
\begin{equation}\label{eq:5.3}
    \int_s^t\frac{1}{r}(1-\mu_j)u_\lambda'(r)w_j(r)\,dr 
    =\xi_j(t)-\xi_j(s)
\end{equation}
for all $0<s<t$, where 
\begin{equation}\label{eq:5.4}
    \xi_j(r) 
    :=r\bigl(u_\lambda''(r) w_j(r)-u_\lambda'(r) w_j'(r)\bigr)
    .
\end{equation}
\end{lemma}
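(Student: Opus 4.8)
The plan is to derive the identity by a Wronskian-type manipulation that combines the radial equation for $u_\lambda$, differentiated once, with the equation \eqref{eq:wj} for $w_j$. The function $\xi_j$ in \eqref{eq:5.4} is tailored so that its derivative reproduces the integrand in \eqref{eq:5.3}, so the whole statement reduces to verifying the pointwise identity $\xi_j'(r)=r^{-1}(1-\mu_j)u_\lambda'(r)w_j(r)$ on $(0,\infty)$ and then integrating over $[s,t]$.

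First I would record the equation satisfied by $u_\lambda'$. By Lemma~\ref{lem:2.1} the ground state $u_\lambda$ solves \eqref{eq:ODE1}, and since the coefficients of this ODE are smooth on $(0,\infty)$, a bootstrap argument (differentiating the equation and using $u_\lambda\in C^2(0,\infty)$) shows $u_\lambda\in C^\infty(0,\infty)$; in particular $u_\lambda'''$ exists. Differentiating \eqref{eq:ODE1} in $r$ gives
\begin{equation*}
    u_\lambda''' + \frac{1}{r}u_\lambda'' - \frac{1}{r^2}u_\lambda' - \lambda u_\lambda' + p u_\lambda^{p-1}u_\lambda' = 0.
\end{equation*}
Next I would multiply this by $w_j$, multiply \eqref{eq:wj} (rewritten as $w_j''+r^{-1}w_j'-(\mu_j r^{-2}+\lambda-pu_\lambda^{p-1})w_j=0$) by $u_\lambda'$, and subtract. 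The zeroth-order terms carrying $\lambda$ and $pu_\lambda^{p-1}$ cancel identically, leaving
\begin{equation*}
    (u_\lambda''' w_j - u_\lambda' w_j'') + \frac{1}{r}(u_\lambda'' w_j - u_\lambda' w_j') + \frac{\mu_j - 1}{r^2}u_\lambda' w_j = 0.
\end{equation*}

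Finally I would multiply by $r$ and observe that the first two groups of terms assemble exactly into $\xi_j'(r)=\frac{d}{dr}\bigl(r(u_\lambda''w_j-u_\lambda'w_j')\bigr)$, the cross terms $r\,u_\lambda''w_j'$ cancelling in the product rule. This yields $\xi_j'(r)=r^{-1}(1-\mu_j)u_\lambda'(r)w_j(r)$, and integrating over $[s,t]$ produces \eqref{eq:5.3}. The regularity available ($w_j\in C^2(0,\infty)$ from Lemma~\ref{lem:5.1}, together with $u_\lambda\in C^\infty(0,\infty)$) justifies every manipulation pointwise on $(0,\infty)$, so the integration is legitimate. There is no genuine analytic obstacle here; the only points requiring care are ensuring $u_\lambda'''$ is available (hence the bootstrap remark) and tracking the exact cancellation of the potential terms, which is precisely what forces the factor $(1-\mu_j)$, rather than $\mu_j$, to appear in front of the integrand.
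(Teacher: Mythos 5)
Your proposal is correct and follows essentially the same route as the paper: both differentiate \eqref{eq:ODE1}, combine it with \eqref{eq:wj} in a Wronskian-type computation to show $\xi_j'(r)=r^{-1}(1-\mu_j)u_\lambda'(r)w_j(r)$, and integrate over $[s,t]$. Your extra remark justifying the existence of $u_\lambda'''$ by bootstrapping the ODE is a small point the paper leaves implicit, but the argument is the same.
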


\begin{proof}
Differentiating \eqref{eq:ODE1} we have 
\begin{equation*}
    -u'''-\frac{1}{r}u''+\frac{1}{r^2}u' 
    +\lambda u'-pu^{p-1}u'
    =0,\quad 
    r>0.
\end{equation*}
By this equation and \eqref{eq:wj} we have 
\begin{align*}
    \big(r(u''w_j-u'w_j')\big)'
   &=u''w_j-u'w_j' 
    +r(u'''w_j-u'w_j'')
\\ &=\Big(\frac{1}{r^2}u' 
    +\lambda u' 
    -pu^{p-1}u'\Big) rw_j
    -\Big(\frac{\mu_j}{r^2}+\lambda -pu^{p-1}\Big)u'w_j
\\ &=\frac{1}{r}(1-\mu_j)u'w_j. 
\end{align*}
Integrating this over $[s, t]$ we obtain the conclusion. 
\end{proof}

\begin{proof}[Proof of Theorem~\ref{thm:2}]
It suffices to show that $w_j= 0$ for all $j\ge 0$. We divide the proof into three cases.

\noindent\textbf{Case 1:} $\mu_j=0$, i.e., $j=0$. 
$w_0= 0$ follows from Lemma~\ref{eq:wj} and Proposition~\ref{prop:nondrad}.

\noindent\textbf{Case 2:} $\mu_j=1$, i.e., $j=1, 2$. 
By \eqref{eq:5.3} we have 
$
    \xi_j(s)
    =\xi_j(t)
$
for all $0<s<t$. Noting that $\xi_j(t)\to 0$ as $t\to\infty$, we have $\xi_j\equiv0$. This and \eqref{eq:5.4} imply that $w_j$ solves the differential equation 
\[  u_\lambda''(r)w_j-u_\lambda'(r)w_j'=0,\quad 
    r>0.    \]
Solving it we obtain 
\[  w_j(r)=Cu_\lambda'(r),\quad r>0 \]
for some constant $C\in\R$. Since $w_j(0) =0$ and $u_\lambda'(r)=y_{u_\lambda}(2\pi r)^{-1}+o(1)$ as $r\to 0$, we have $C=0$. Thus, $w_j=0$.

\noindent\textbf{Case 3:} $\mu_j>1$ i.e., $j\ge 3$. 
Suppose that $w_j\not\equiv 0$ for some $j\ge 3$. Note that by Lemma~\ref{lem:5.1}
\[  \lim_{r\downarrow 0}w_j(r)=0\quad \text{and}\quad 
    \lim_{r\to\infty}w_j(r)=0.  \]
Thus, by the continuity of $w_j$ (and by replacing $w$ with $-w$ if necessary) we can take a interval $(s, t)\subset(0, \infty)$ such that 
\begin{align*}
    &w_j(r)>0,\quad 
    r\in(s, t),&
    &\lim_{r\downarrow s}w_j(r)
    =\lim_{r\uparrow t}w_j(r)
    =0,
\end{align*}
and we also see that 
\begin{align*}
    &\lim_{r\downarrow s}w_j'(r)
    \in[0, \infty],&
    &\lim_{r\uparrow t}w_j'(r)\le 0.
\end{align*}
By \eqref{eq:5.3} and $\mu_j>1$, we have 
\begin{equation} \label{eq:6.7}
    -\infty\le \lim_{r\downarrow s}\xi_j(r)
    <\lim_{r\uparrow t}\xi_j(r).
\end{equation} 
On the other hand we see that 
\begin{align*}
    \lim_{r\uparrow t}\xi_j(r) 
   &=\lim_{r\uparrow t}\bigl(-u_\lambda'(r)+\lambda ru_\lambda(r)-ru_\lambda(r)^{p}\bigr)w_j(r) 
    -ru_\lambda'(r) w_j'(r)
\\ &=\left\{\begin{alignedat}{2}
   &\mathopen{}-tu_\lambda'(t)w_j'(t)\le 0 & \quad
   &\text{if }t<\infty,
\\ & \omit\hfil$0$\hfil &  
   &\text{if }t=\infty,
    \end{alignedat}\right.
\end{align*}
and 
\begin{align*}
    \lim_{r\downarrow s}\xi_j(r) 
   &=\lim_{r\downarrow s}\bigl(-u_\lambda'(r)+\lambda ru_\lambda(r)-ru_\lambda(r)^{p}\bigr)w_j(r) 
    -ru_\lambda'(r) w_j'(r)
\\ &=\left\{\begin{alignedat}{2}
   &\mathopen{}-su_\lambda'(s)w_j'(s)\ge 0 & 
   &\text{if }s>0,
\\ &\frac{y_{u_\lambda}}{2\pi}\lim_{r\downarrow s}\Bigl(\frac{w_j(r)}{r}
    +w_j'(r)\Bigr)
    \in [0, \infty] & \quad
   &\text{if }s=0.
    \end{alignedat}\right.
\end{align*}
Thus, we have $+\infty\ge\lim_{r\downarrow s}\xi_j(r)\ge 0 \ge\lim_{r\uparrow t}\xi_j(r)$. This contradicts \eqref{eq:6.7}. This completes the proof.
\end{proof}

\section*{Acknowledgments}

The author was supported by JSPS KAKENHI Grant Number JP20K14349.
The author would like to thank Mario Rastrelli for discussion on Remark~\ref{rem:expc1c2}.



\bibliographystyle{amsplain_abbrev_nobysame_nonumber}
\bibliography{bibfile}

\end{document}